\newtheorem{thm}{Theorem}[section]
\newtheorem{cor}[thm]{Corollary}
\newtheorem{lm}[thm]{Lemma}
\newtheorem{pr}[thm]{Proposition}
\newtheorem{defn}[thm]{Definition}
\theoremstyle{remark}
\newtheorem{rem}[thm]{Remark}
\newtheorem{ex}[thm]{Example}
\def\Ann{\operatorname{Ann}}
\def\ann{\operatorname{ann}}
\def\Spec{\operatorname{Spec}}
\numberwithin{equation}{section}
\title{Automorphisms and derivations of Leibniz algebras}
\author[M. Ladra \and  I. M. Rikhsiboev \and R. M. Turdibaev]{M. Ladra \and  I. M. Rikhsiboev \and R. M. Turdibaev}
\begin{document}

\begin{abstract}
The present work is devoted to the extension of some general properties of automorphisms and derivations which are known for Lie algebras to finite dimensional complex Leibniz algebras. The analogues of the Jordan-–Chevalley decomposition for derivations and the multiplicative decomposition for automorphisms of finite dimensional complex Leibniz algebras are obtained.
\end{abstract}

\keywords{Leibniz algebra; Lie algebra; derivation; automorphism; nilpotent Leibniz algebra}
\subjclass[2010]{17A32, 17A36, 17B40.}

\maketitle

\section{Introduction}

 Leibniz algebras were first introduced by Loday in \cite{Lo1,Lo2} as a non-antisymmetric version
 of Lie algebras. Many results of Lie algebras were also established in Leibniz algebras. Since the study
  of the properties of derivations and automorphisms of Lie algebra play an essential role in the
  theory of Lie algebras,  the question naturally arises whether the corresponding results can be extended to the more general framework of the Leibniz algebras.

In this work we consider some general properties of derivations and automorphisms of Leibniz algebras. We extend some results obtained for derivations and automorphisms of Lie algebras in \cite{Gant,Jac1} to the case
of Leibniz algebras. Among them we prove the analogue of the Jordan-–Chevalley decomposition,
 which expresses a derivation of a Leibniz algebra as the sum of its commuting semisimple and nilpotent parts. Similar results were
 established in \cite{Gant} and \cite{Hum} for Lie algebras. If the
linear operator is invertible, then the Jordan--Chevalley decomposition expresses it as a product of commuting semisimple and unipotent operators.
 Gantmacher \cite{Gant}, in the theory of Lie algebras,  proved that any automorphism of Lie algebras decomposes into the product of commuting semisimple automorphism and exponent of a nilpotent derivation. In this work we verify that the same results hold in Leibniz algebras.

In 1955, Jacobson \cite{Jac1} proved that every Lie algebra over a field of characteristic zero
admitting a nonsingular derivation is nilpotent. The problem whether the converse of this statement
 is correct remained open until that an example of a nilpotent Lie algebra in which every derivation
 is nilpotent (and hence, singular) was constructed in \cite{Dix}. Nilpotent Lie
  algebras with this property were named characteristically nilpotent Lie algebras.
  In \cite{Hakim} it was proved that every irreducible component of the variety of
  complex filiform Lie algebras of dimension greater than 7 contains a Zariski open set
  consisting of characteristically nilpotent Lie algebras. Note that among the nilpotent
   Lie algebras of dimension less than 7, characteristically nilpotent Lie algebras do not occur due to the classification given in \cite{Goze}.

In this paper we prove that a finite dimensional complex Leibniz algebra admitting a non-degenerate derivation is nilpotent. Similar to the Lie case, the inverse of this statement does not hold. The notion of characteristically nilpotent Leibniz algebra is defined similarly as in the Lie case. It was established in \cite{Omirov} that characteristically nilpotent non-Lie filiform Leibniz algebras occur starting with dimension 5.

An advance of Theorems \ref{nilpotent} and  \ref{aut_prime} was presented in \cite{Rik}. We also have found in the literature the reference \cite{BHMSS}, where some results here proved are showed with another techniques.

In the present paper, all vector spaces and algebras are considered over the field of the complex numbers  $\mathbb{C}$.
 We will denote by  $C^i_{k}$  the binomial coefficient $\binom{k}{i}$.

\section{Preliminaries}

In this section we present some known notions and results concerning Leibniz algebras that we use further in this work.

\begin{defn} An algebra $L$ over a field $F$ is called a Leibniz algebra if for any $x,y,z\in L$ the Leibniz identity
\[[[x,y],z]=[[x,z],y]+[x,[y,z]]\] is satisfied, where $[-,-]$ is the multiplication in $L$.
\end{defn}

In other words, the right multiplication operator  $[- ,z ]$ by any element $z$ is a derivation (see \cite{Lo1}).

 Any Lie algebra is a Leibniz algebra, and conversely any Leibniz algebra $L$ is a Lie algebra if $[x,x]=0$ for all $x \in L$.
 Moreover, if $L^{\ann}= \mbox{ ideal} <[x,x] \, | \,  x \in L>$, then the factor algebra $L/L^{\ann}$ is a Lie algebra.

For a Leibniz algebra $L$ consider the following derived and lower central series:
\begin{align*}
  & \mbox{(i)}   &L^{(1)}= & \ L, \  & L^{(n+1)}= & \ [L^{(n)},L^{(n)}],  & n>1; \\
  & \mbox{(ii)}  & L^1= &  \ L, \ & L^{n+1}=&  \ [L^n,L],  & n>1.
\end{align*}

\begin{defn} An algebra $L$ is called solvable (nilpotent) if there exists $s\in \mathbb{N}$ \ ($k\in \mathbb{N}$, respectively) such that $L^{(s)}=0$ \ ($L^k=0$, respectively).
\end{defn}

The following theorem from linear algebra characterizes the decomposition of a vector space into a direct sum of characteristic subspaces.

\begin{thm}[\cite{Mal}] Let $A$ be a linear transformation of vector space $V$. Then $V$ decomposes into the direct sum of characteristic subspaces $V=V_{\lambda_1}\oplus V_{\lambda_2} \oplus \dots \oplus V_{\lambda_k}$  with respect to $A$, where $V_{\lambda_i}=\{x\in V\ | \ (A-\lambda_i I)^k(x)=0 \mbox{ for some } k\in \mathbb{N}\}$ and $\lambda_i, 1\leq i \leq k$, are eigenvalues of $A$.
\end{thm}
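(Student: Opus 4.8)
The plan is to establish the classical primary decomposition, using that over $\mathbb{C}$ the minimal polynomial of $A$ splits into linear factors. First I would let $m(t)=\prod_{i=1}^{k}(t-\lambda_i)^{n_i}$ be the minimal polynomial of $A$, where $\lambda_1,\dots,\lambda_k$ are its distinct roots; these are precisely the eigenvalues of $A$. Setting $f_i(t)=\prod_{j\neq i}(t-\lambda_j)^{n_j}=m(t)/(t-\lambda_i)^{n_i}$, the polynomials $f_1,\dots,f_k$ have no common root, so $\gcd(f_1,\dots,f_k)=1$. By B\'ezout's identity in $\mathbb{C}[t]$ there exist polynomials $g_1,\dots,g_k$ with $\sum_{i=1}^{k}g_i(t)f_i(t)=1$.

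Next I would introduce the operators $P_i=g_i(A)f_i(A)$. Evaluating the B\'ezout identity at $A$ gives $\sum_{i=1}^{k}P_i=I$. For $i\neq j$ the product $f_i(t)f_j(t)$ is divisible by $m(t)$, since between the two factors every power $(t-\lambda_l)^{n_l}$ occurs; hence $f_i(A)f_j(A)=0$ because $m(A)=0$, and as polynomials in $A$ commute this yields $P_iP_j=0$ for $i\neq j$. Multiplying $\sum_j P_j=I$ on the left by $P_i$ then gives $P_i^2=P_i$, so the $P_i$ form a family of commuting idempotents summing to the identity, which produces the direct sum decomposition $V=\bigoplus_{i=1}^{k}\operatorname{Im}P_i$. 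Each summand $\operatorname{Im}P_i$ is $A$-invariant, because $A$ commutes with the polynomial $P_i$.

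It then remains to identify each image with the characteristic subspace $V_{\lambda_i}$. For the inclusion $\operatorname{Im}P_i\subseteq V_{\lambda_i}$ I would note that if $x=P_iy$ then, since $m(t)=(t-\lambda_i)^{n_i}f_i(t)$, we get $(A-\lambda_i I)^{n_i}x=g_i(A)\,m(A)\,y=0$; in particular $A-\lambda_i I$ acts nilpotently on $\operatorname{Im}P_i$. For the reverse inclusion, given $x\in V_{\lambda_i}$ with $(A-\lambda_i I)^{k}x=0$, I would write $x=\sum_l x_l$ with $x_l\in\operatorname{Im}P_l$; applying $(A-\lambda_i I)^{k}$ and using $A$-invariance of the summands together with the directness of the sum forces $(A-\lambda_i I)^{k}x_l=0$ for every $l$. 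On $\operatorname{Im}P_l$ with $l\neq i$ the operator $A-\lambda_i I=(A-\lambda_l I)+(\lambda_l-\lambda_i)I$ is a nilpotent plus a nonzero scalar, hence invertible, so $x_l=0$ for $l\neq i$ and therefore $x=x_i\in\operatorname{Im}P_i$. This gives $\operatorname{Im}P_i=V_{\lambda_i}$ and completes the proof.

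The main obstacle is the divisibility bookkeeping in the middle paragraph, namely verifying that $m(t)\mid f_i(t)f_j(t)$ for $i\neq j$ so that the cross terms $P_iP_j$ vanish, and establishing invertibility of $A-\lambda_i I$ on the "wrong" summands for the reverse inclusion. Once the B\'ezout identity furnishes the commuting idempotents $P_i$, everything else reduces to manipulating polynomials in the single operator $A$, which commute among themselves, so no further subtlety arises.
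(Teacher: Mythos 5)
Your argument is correct and complete: the Bézout/idempotent construction of the projections $P_i=g_i(A)f_i(A)$, the vanishing of the cross terms via $m(t)\mid f_i(t)f_j(t)$, and the identification $\operatorname{Im}P_i=V_{\lambda_i}$ (using invertibility of $A-\lambda_i I$ on the other summands) is exactly the standard primary decomposition proof. The paper itself gives no proof, citing \cite{Mal}, and your argument is the same classical one found there; the only implicit hypothesis worth flagging is that $V$ must be finite dimensional for the minimal polynomial to exist, which is the setting of the paper.
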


The following proposition gives the (additive) Jordan--Chevalley decomposition of an endomorphism.

\begin{pr}[\cite{Hum}]\label{Hump}
Let $V$ be a finite dimensional vector space over $\mathbb{C}$,\ $x\in End(V)$.
\begin{itemize}
  \item[(i)] There exist unique $x_d, x_n\in End(V)$ satisfying the conditions: $x=x_d+x_n$, $x_d$ is diagonalizable, $x_n$ is nilpotent, $x_d$ and $x_n$ commute.
  \item[(ii)] There exist polynomials $p(t),\ q(t) \in \mathbb{C}[t]$, without constant term, such that $x_d=p(x),\ x_n=q(x)$. In particular, $x_d$ and $x_n$ commute with any endomorphism commuting with $x$.
  \item[(iii)] If $A\subseteq B \subseteq V$ are subspaces and $x$ maps $B$ in $A$, then $x_d$ and $x_n$ also map $B$ in $A$.
\end{itemize}
\end{pr}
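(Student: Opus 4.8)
The plan is to build the decomposition explicitly from the characteristic subspace decomposition recalled in the theorem above, using the Chinese Remainder Theorem to produce the required polynomials. Write $V=V_{\lambda_1}\oplus\dots\oplus V_{\lambda_k}$ for the characteristic subspaces of $x$, and for each $i$ let $m_i\in\mathbb{N}$ be an exponent for which $(x-\lambda_i I)^{m_i}$ annihilates $V_{\lambda_i}$; such $m_i$ exist by finite dimensionality, and the characteristic polynomial of $x$ is then $\prod_i (t-\lambda_i)^{m_i}$. I want $x_d$ to act as the scalar $\lambda_i$ on each $V_{\lambda_i}$, and the heart of the argument is to realize this single operator as a polynomial in $x$.

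First I would construct $p(t)$. Since the polynomials $(t-\lambda_i)^{m_i}$ are pairwise coprime, the Chinese Remainder Theorem yields $p(t)\in\mathbb{C}[t]$ with $p(t)\equiv\lambda_i\pmod{(t-\lambda_i)^{m_i}}$ for every $i$. If some $\lambda_i=0$ this congruence already forces $p(0)=0$; if $0$ is not an eigenvalue I would append the extra congruence $p(t)\equiv 0\pmod t$, whose modulus is coprime to the others, so that CRT still applies and again $p(0)=0$. Setting $x_d:=p(x)$, $q(t):=t-p(t)$ and $x_n:=q(x)=x-x_d$, both polynomials have vanishing constant term. On each $V_{\lambda_i}$ the operator $p(x)-\lambda_i I$ is divisible by $(x-\lambda_i I)^{m_i}$ and hence vanishes, so $x_d$ equals the scalar $\lambda_i$ there and is therefore diagonalizable; correspondingly $x_n$ restricts to $x-\lambda_i I$ on $V_{\lambda_i}$, which is nilpotent, so $x_n$ is nilpotent on all of $V$. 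Being polynomials in $x$, the two parts commute and commute with every endomorphism commuting with $x$, which settles existence together with (ii).

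For uniqueness in (i), suppose $x=s+n$ with $s$ diagonalizable, $n$ nilpotent and $[s,n]=0$. Then $s$ and $n$ each commute with $x=s+n$, so by (ii) they commute with $x_d$ and $x_n$; hence the four operators commute pairwise. The identity $x_d-s=n-x_n$ now exhibits a single operator that is a difference of commuting diagonalizable maps, hence diagonalizable, and simultaneously a difference of commuting nilpotent maps, hence nilpotent; such an operator is $0$, giving $s=x_d$ and $n=x_n$. Part (iii) follows from the vanishing constant terms: if $x(B)\subseteq A\subseteq B$, then inductively $x^j(B)\subseteq A$ for all $j\geq 1$, so any polynomial in $x$ without constant term maps $B$ into $A$; in particular $x_d=p(x)$ and $x_n=q(x)$ do.

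The main obstacle is not a single hard computation but the bookkeeping that forces all requirements to hold at once: the CRT construction must be arranged so that the one polynomial $p$ simultaneously reproduces every eigenvalue \emph{and} carries no constant term, the latter being exactly what part (iii) requires, and the uniqueness argument must be run only after (ii) is available, so that the competing decomposition is known to commute with $x_d$ and $x_n$. Once that commutativity is secured, the elementary fact that an operator which is both diagonalizable and nilpotent must vanish closes the proof.
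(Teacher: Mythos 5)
Your proof is correct and is essentially the standard argument from the cited source: the paper itself gives no proof of this proposition, quoting it directly from Humphreys, and your CRT construction of $p(t)$ with the extra congruence $p(t)\equiv 0 \pmod t$, followed by the commuting-decompositions uniqueness argument, is precisely the proof found there. The only inessential imprecision is the side remark that the characteristic polynomial equals $\prod_i(t-\lambda_i)^{m_i}$ for an arbitrary choice of annihilating exponents $m_i$ (true only when $m_i$ is the algebraic multiplicity), but nothing in your argument depends on this.
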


In Leibniz algebras a derivation is defined as usual.

\begin{defn} A linear operator $d \colon L\to L$ is called a derivation of $L$ if
\[d([x,y])=[d(x),y]+[x,d(y)]  \  \  \mbox{ for any } x,y\in L.\]
\end{defn}

For an arbitrary element $x\in L$, we consider the right multiplication operator $R_x \colon L\to L$
defined by $R_x(z)=[z,x]$. Right multiplication operators are derivations of the algebra $L$.
 The set $R(L)=\{R_x\ |\ x\in L\}$ is a Lie algebra with respect to the commutator and the following identity holds:
\begin{equation}\label{2.1}
R_xR_y-R_yR_x=R_{[y,x]} \, .
\end{equation}

\begin{defn}[\cite{Jac}]
 A subset $S$ of an associative algebra $A$ over a field $F$ is called weakly closed if for every pair $(a,b)\in S\times S$ an element  $\gamma(a,b)\in F$
  is defined  such that $ab+\gamma(a,b)\, ba\in S$.
\end{defn}

Further, we need a result concerning the weakly closed sets.

\begin{thm}[\cite{Jac}] \label{closed} Let $S$ be a weakly closed subset of the associative algebra $A$ of linear transformations of a finite-dimensional vector space $V$ over $F$. Assume every $W\in S$ is nilpotent, that is, $W^k=0$ for some positive integer $k$. Then the enveloping associative algebra $S^*$ of $S$ is nilpotent.
\end{thm}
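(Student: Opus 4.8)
The plan is to prove Theorem~\ref{closed} by induction on $n=\dim V$, reducing everything to the case in which $S$ acts irreducibly and deriving a contradiction there from a trace computation. First I would record the two permanence properties that make the induction run: if $U\subseteq V$ is a subspace left invariant by every $W\in S$, then the restrictions $\{W|_U\}$ and the induced operators $\{\bar W\}$ on $V/U$ are again weakly closed sets of nilpotent transformations, with the same constants $\gamma$, since $(WW'+\gamma W'W)|_U=W|_U\,W'|_U+\gamma\,W'|_U\,W|_U$ and likewise on the quotient. A strict flag for each of these two pieces concatenates to a strict flag for $S$ on $V$, along which $S^*$ is strictly triangular and hence nilpotent. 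Thus the theorem follows once I show that for $n\geq 2$ the set $S$ cannot act irreducibly: any such $V$ then has a proper nonzero $S$-invariant subspace and the induction closes, the cases $n\le 1$ being trivial because a nilpotent scalar is $0$.

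The heart is therefore the irreducible case, and here I would argue by traces. Suppose $S$ acts irreducibly on $V$ with $n\geq 2$. Since an $S$-invariant subspace is automatically $S^*$-invariant, the enveloping algebra $S^*$ acts irreducibly as well, so by the classical Burnside theorem (we are over $\mathbb{C}$) one has $S^*=\operatorname{End}(V)$. I would contradict this by establishing the following \emph{Trace Lemma}: $\operatorname{tr}(Z_1Z_2\cdots Z_m)=0$ for all $Z_1,\dots,Z_m\in S$ and all $m\geq 1$. Granting the lemma, $\operatorname{tr}$ vanishes on the linear span of all products, that is on $S^*=\operatorname{End}(V)$; but $\operatorname{tr}(\mathrm{Id})=n\neq 0$, a contradiction. (Equivalently, the lemma shows that every $X\in S^*$ has all its power-traces $\operatorname{tr}(X^k)=0$, so $X$ is nilpotent and $S^*$ is a finite-dimensional nil, hence nilpotent, associative algebra; this variant avoids even invoking Burnside.)

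To prove the Trace Lemma I would induct on the length $m$; the case $m=1$ is exactly nilpotency of the elements of $S$. For the inductive step, weak closure supplies $W_i:=Z_iZ_{i+1}+\gamma_iZ_{i+1}Z_i\in S$ with $\gamma_i=\gamma(Z_i,Z_{i+1})$. Substituting $Z_iZ_{i+1}=W_i-\gamma_iZ_{i+1}Z_i$ inside the word and using that every product of $m-1$ elements of $S$ has trace $0$ (the word with the adjacent pair replaced by $W_i\in S$ is such a product), I obtain the adjacent-transposition rule
\[\operatorname{tr}(\cdots Z_iZ_{i+1}\cdots)=-\gamma_i\,\operatorname{tr}(\cdots Z_{i+1}Z_i\cdots).\]
Sliding $Z_1$ past $Z_2,\dots,Z_m$ in turn and then invoking the cyclic invariance of the trace yields
\[(1-c)\,\operatorname{tr}(Z_1Z_2\cdots Z_m)=0,\qquad c=\prod_{j=2}^{m}\bigl(-\gamma(Z_1,Z_j)\bigr).\]
Whenever $c\neq 1$ this forces $\operatorname{tr}(Z_1\cdots Z_m)=0$, which completes that branch.

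The main obstacle I anticipate is precisely the degenerate case $c=1$, already visible at $m=2$ as $\gamma(Z_1,Z_2)=-1$, i.e.\ $Z_1Z_2-Z_2Z_1\in S$, which the single relation above leaves undetermined. This is exactly where the full strength of the hypothesis, rather than a lone commutator lying in $S$, must be exploited: weak closure generates a whole family of nilpotent combinations of the $Z_i$ (the $W_i$ and the further weakly closed products built from them), each of trace $0$, and the degenerate case should be closed by feeding these back into the computation to produce a second, independent linear relation on $\operatorname{tr}(Z_1\cdots Z_m)$ that forces it to vanish. This interplay—quantifying why a merely commutator-nilpotent set is insufficient but a weakly closed one suffices, in the spirit of McCoy-type simultaneous-triangulation criteria—is where I expect the genuine work of the proof to lie.
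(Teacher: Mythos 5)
The paper offers no proof of this statement---it is quoted from Jacobson's \emph{Lie algebras} as a preliminary tool---so your argument has to stand on its own, and it does not: the ``obstacle'' you flag at the end is not a residual degenerate branch but the entire content of the theorem. Your reduction to the irreducible case is fine, and the derivation of $(1-c)\operatorname{tr}(Z_1\cdots Z_m)=0$ with $c=\prod_{j=2}^m\bigl(-\gamma(Z_1,Z_j)\bigr)$ is correct, but the case $c=1$ contains the single most important instance of the theorem: a Lie algebra of nilpotent operators, where one may take $\gamma\equiv-1$ for every pair, so that each factor $-\gamma(Z_1,Z_j)$ equals $1$ and $c=1$ for every word of every length. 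There your adjacent-transposition rule says only that $\operatorname{tr}(Z_1\cdots Z_m)$ is unchanged, modulo shorter words that vanish by induction, under permutations of the letters---which together with cyclicity of the trace is a tautology. Feeding the auxiliary elements $W_i$ back in does not help, because for a Lie algebra every $W_i$ is again a commutator lying in the same Lie algebra and generates only more relations of the same symmetric type; already at $m=2$ the relation collapses to $\operatorname{tr}(Z_1Z_2)=\operatorname{tr}(Z_2Z_1)$. So the method proves the theorem precisely in those cases where it does \emph{not} specialize to Engel's theorem, and the Engel case is the one the paper actually needs (the set $\bigcup_i R(L_{\rho_i})$ in Theorem \ref{nilpotent} is weakly closed with $\gamma=\pm1$ coming from the commutator identity \eqref{2.1}). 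A secondary point: the statement is over an arbitrary field $F$, while $\operatorname{tr}(\mathrm{Id})=n\neq0$ and ``all power traces vanish $\Rightarrow$ nilpotent'' need characteristic $0$; harmless for this paper, which works over $\mathbb{C}$, but it signals that traces cannot be the right mechanism.

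For comparison, Jacobson's proof runs along a completely different induction: one shows that a nonzero weakly closed set $S$ of nilpotent transformations annihilates a common nonzero vector, by choosing a weakly closed subsystem $T\subseteq S$ maximal subject to $T^*$ being nilpotent and showing that $T=S$ (otherwise a suitable $W\in S\setminus T$ could be adjoined to $T$ while keeping the enveloping algebra nilpotent, the weak-closure hypothesis entering through the nilpotency of the maps $X\mapsto XW+\gamma WX$ rather than through any trace identity); one then inducts on $\dim V$ using the common null space. If you wish to salvage a trace-flavoured argument you would have to dispose of the $c=1$ configurations by the classical Engel induction anyway, at which point the trace computation is doing none of the work.
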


The classical Engel's theorem from Lie algebras has the following analogue in Leibniz algebras.

\begin{thm}[\cite{Ayup}, Engel's theorem] \label{Engel} A Leibniz algebra $L$ is nilpotent if and only if $R_x$ is nilpotent for any $x\in L$.
\end{thm}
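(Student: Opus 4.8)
For the forward direction, suppose $L$ is nilpotent, so that $L^k=0$ for some $k$. Since $R_x(L^n)=[L^n,x]\subseteq L^{n+1}$, iterating the right multiplication operator pushes every element up the lower central series: for any $z\in L$ one has $R_x^{\,k-1}(z)\in L^k=0$. Hence $R_x^{\,k-1}=0$, and each $R_x$ is nilpotent. This half requires only the tower of inclusions coming from the definition of $L^{n+1}=[L^n,L]$.

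The converse carries the real content. Assuming every $R_x$ is nilpotent, I would consider the set $S=R(L)\subseteq End(L)$, and the first step is to observe that $S$ is weakly closed in the associative algebra $End(L)$. Indeed, identity \eqref{2.1} gives $R_xR_y-R_yR_x=R_{[y,x]}\in R(L)=S$, so the constant choice $\gamma(R_x,R_y)=-1$ realizes the weakly closed condition $R_xR_y+\gamma(R_x,R_y)R_yR_x\in S$ for every pair. This is precisely the hypothesis needed to invoke the machinery of Theorem \ref{closed}.

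Since each element of $S$ is nilpotent by assumption, Theorem \ref{closed} then yields that the enveloping associative algebra $S^*$ is nilpotent; that is, $(S^*)^m=0$ for some $m\in\mathbb{N}$, and in particular every product $R_{x_m}R_{x_{m-1}}\cdots R_{x_1}$ of $m$ right multiplication operators vanishes. The final step translates this into nilpotency of $L$: unwinding $L^{n+1}=[L^n,L]$ shows that $L^{n}$ is spanned by the left-normed elements $R_{x_{n-1}}\cdots R_{x_1}(z)$, so that $L^{m+1}$ is spanned by expressions involving a product of $m$ operators from $S$, all of which are zero. Therefore $L^{m+1}=0$ and $L$ is nilpotent.

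I expect the crux to be the recognition that \eqref{2.1} fits the definition of a weakly closed set exactly, which is what converts the hypothesis on individual operators into a statement about the whole enveloping algebra through Theorem \ref{closed}. The subsequent bookkeeping relating $(S^*)^m=0$ to $L^{m+1}=0$ via the left-normed description of the lower central series is routine, but the indices must be set up carefully so that the number of operators in the product matches the length of the central series term.
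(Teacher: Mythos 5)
Your proof is correct. The paper itself gives no proof of this theorem (it is quoted from \cite{Ayup}), but your argument is exactly the standard one and matches the machinery the paper assembles: the forward direction via $R_x(L^n)\subseteq L^{n+1}$, and the converse by observing that identity \eqref{2.1} makes $R(L)$ weakly closed (with $\gamma\equiv -1$) and invoking Theorem \ref{closed}, which is precisely how the paper argues in the proof of Theorem \ref{nilpotent}.
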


\begin{defn} The set $\Ann_R(L)=\{x\in L \, | \, [L,x]=0\}$ of a Leibniz algebra $L$ is called the right annihilator of $L$.
\end{defn}
One can show that $\Ann_R(L)$ is an ideal of $L$.

For a Leibniz algebra $L$, let $H$ be a maximal solvable ideal in the sense that $H$ contains any solvable ideal of $L$. Since the sum of solvable ideals is again a solvable ideal (see \cite{Ayup}), this implies the existence of a unique maximal solvable ideal, which is said to be the \emph{radical} of $L$.

Similarly, let $K$ be a maximal nilpotent ideal of Leibniz algebra $L$. Since the sum of nilpotent
 ideals is a nilpotent ideal (see \cite{Ayup}), this implies the existence of a unique maximal nilpotent ideal,
  which is said to be the \emph{nilradical} of $L$.  Notice that, the nilradical does not possess the properties of the radical in the sense of Kurosh.

\section{Main Result}

This section is devoted to the extension of known results for Lie algebras on automorphisms and derivations to Leibniz algebras.

\begin{lm}\label{der} Let $L$ be a finite dimensional Leibniz algebra with a derivation $d$ defined on it and $L=L_{\rho_1}\oplus \dots \oplus L_{\rho_s}$ be a decomposition of $L$ into characteristic spaces with respect to $d$. Then for any $\alpha,\beta\in \Spec(d)$ we have
\[
[L_{\alpha},L_{\beta}]\subseteq\left\{\begin{array}{cl}
L_{\alpha+\beta}& \textrm{ if } \alpha+\beta \textrm{ is an eigenvalue of } d\\
0 & \textrm{ if } \alpha+\beta \textrm{ is not an eigenvalue of } d \ .\\
\end{array}\right.
\]
\end{lm}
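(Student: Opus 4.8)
The plan is to fix arbitrary $x\in L_{\alpha}$ and $y\in L_{\beta}$ and show that $[x,y]$ is annihilated by a sufficiently high power of $d-(\alpha+\beta)I$. The starting point is the single observation that, for \emph{any} $u,v\in L$, the derivation property yields
\[
(d-(\alpha+\beta)I)[u,v]=[(d-\alpha I)u,\,v]+[u,\,(d-\beta I)v],
\]
which follows at once from $d([u,v])=[d(u),v]+[u,d(v)]$ after subtracting $(\alpha+\beta)[u,v]$ and regrouping. The essential feature is that this identity requires no assumption on $u$ and $v$, so it may be iterated without restriction.

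Next I would prove, by induction on $n$, the binomial-type formula
\[
(d-(\alpha+\beta)I)^{n}[x,y]=\sum_{i=0}^{n}C^{i}_{n}\,\bigl[(d-\alpha I)^{i}x,\,(d-\beta I)^{n-i}y\bigr].
\]
The case $n=1$ is precisely the identity above. For the inductive step one applies $d-(\alpha+\beta)I$ to each summand, using the single-step identity with $u=(d-\alpha I)^{i}x$ and $v=(d-\beta I)^{n-i}y$; collecting the resulting terms and invoking Pascal's rule $C^{i-1}_{n}+C^{i}_{n}=C^{i}_{n+1}$ gives the formula for $n+1$. This bookkeeping in the induction is the only genuinely delicate part of the argument; everything else is a direct consequence once the formula is available.

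With the formula in hand I use that $x$ and $y$ lie in characteristic subspaces: there exist $p,q\in\mathbb{N}$ with $(d-\alpha I)^{p}x=0$ and $(d-\beta I)^{q}y=0$. Choosing $n=p+q-1$, every index $i$ in the sum satisfies either $i\geq p$, which kills the first factor, or $n-i\geq q$, which kills the second; indeed $i\leq p-1$ together with $n-i\leq q-1$ would force $n\leq p+q-2$, a contradiction. Hence every summand vanishes and $(d-(\alpha+\beta)I)^{n}[x,y]=0$.

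Finally I distinguish the two cases. If $\alpha+\beta\in\Spec(d)$, then the relation $(d-(\alpha+\beta)I)^{n}[x,y]=0$ means exactly, by the definition of the characteristic subspace in the Maltsev decomposition theorem, that $[x,y]\in L_{\alpha+\beta}$. If $\alpha+\beta\notin\Spec(d)$, then $d-(\alpha+\beta)I$ is nonsingular, so its power $(d-(\alpha+\beta)I)^{n}$ is invertible and the equation forces $[x,y]=0$. Since $x$ and $y$ were arbitrary, bilinearity of the bracket extends both conclusions from generators to all of $L_{\alpha}$ and $L_{\beta}$, establishing the claimed inclusions.
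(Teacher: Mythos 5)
Your proposal is correct and follows essentially the same route as the paper: the one-step identity $(d-(\alpha+\beta)I)[u,v]=[(d-\alpha I)u,v]+[u,(d-\beta I)v]$, the binomial formula proved by induction via Pascal's rule, and annihilation of $[x,y]$ by a high power of $d-(\alpha+\beta)I$ (you use $p+q-1$ where the paper uses $p+q$, an immaterial difference). Your explicit final case distinction merely spells out what the paper leaves implicit.
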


\begin{proof}
First observe that $(d-(\alpha+\beta)I)([x,y])=[d(x),y]+[x,d(y)]-(\alpha+\beta)[x,y]= [(d-\alpha I)(x),y]+[x,(d-\beta I)(y)]$. Now assume that
\begin{equation} \label{3.1}
(d-(\alpha+\beta)I)^k([x,y])=\sum_{i=0}^k C^i_k [(d-\alpha I)^i(x),(d-\beta I)^{k-i}(y)]
\end{equation}
 for some $k>1$. Then
\begin{multline*}
(d-(\alpha+\beta)I)^{k+1}([x,y])=(d-(\alpha+\beta)I)\left(\sum_{i=0}^k C^i_k [(d-\alpha I)^i(x),(d-\beta I)^{k-i}(y)]\right) \\
=\sum_{i=0}^k C^i_k [(d-\alpha I)^{i+1}(x),(d-\beta I)^{k-i}(y)]+\sum_{i=0}^k C^i_k [(d-\alpha I)^i(x),(d-\beta I)^{k-i+1}(y)]\\
=[(d-\alpha I)^{k+1}(x),(y)]+\sum_{i=0}^{k-1} C^i_k [(d-\alpha I)^{i+1}(x),(d-\beta I)^{k+1-(i+1)}(y)]\\
+\sum_{i=1}^{k} C^i_k [(d-\alpha I)^{i}(x),(d-\beta I)^{k+1-i}(y)]+[x,(d-\beta I)^{k+1}(y)]\\
=[(d-\alpha I)^{k+1}(x),y]+\sum_{i=1}^{k} (C^{i-1}_k+C^i_k) [(d-\alpha I)^{i}(x),(d-\beta I)^{k+1-i}(y)]+[x,(d-\beta I)^{k+1}(y)]\\
=[(d-\alpha I)^{k+1}(x),y]+\sum_{i=1}^{k} C^{i}_{k+1} [(d-\alpha I)^{i}(x),(d-\beta I)^{k+1-i}(y)]+[x,(d-\beta I)^{k+1}(y)]\\
=\sum_{i=0}^{k+1} C^i_{k+1} [(d-\alpha I)^i(x),(d-\beta I)^{k+1-i}(y)] \, .
\end{multline*}
Hence \eqref{3.1} holds for any $k\in \mathbb{N}$.

Consider $x\in L_{\alpha},\, y\in L_{\beta}$. Then there exist natural numbers $p,q$ such that $(d-\alpha I)^p(x)=0$ and $(d-\beta I)^q(y)=0$.
In \eqref{3.1} taking $k=p+q$ we have that $\big(d-(\alpha+\beta)I\big)^k([x,y])=0$ which completes the proof of the statement of the lemma.
\end{proof}

Let $d$ be a derivation of a Leibniz algebra $L$. From the definition of derivation it is straightforward that $\ker d$ is a subalgebra.
 Moreover, by Lemma \ref{der} we have $[L_0,L_0]\subseteq L_0$ and hence $L_0$ is also a subalgebra of $L$.

The following theorem is a generalization of the analogous result in the theory of Lie algebras established in \cite{Gant}.

\begin{thm}\label{D=D_0+T} Let $D$ be a derivation of a Leibniz algebra $L$. Then there exists a unique diagonalizable derivation $D_0$ and a unique nilpotent derivation $T$ such that $D=D_0+T$ and $D_0 T=TD_0$.
\end{thm}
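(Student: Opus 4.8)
The plan is to obtain the decomposition at the level of linear endomorphisms from Proposition \ref{Hump} and then to promote the semisimple part to an honest derivation using the grading supplied by Lemma \ref{der}. Concretely, viewing $D\in End(L)$, Proposition \ref{Hump}(i) furnishes unique $D_0,T\in End(L)$ with $D=D_0+T$, where $D_0$ is diagonalizable, $T$ is nilpotent, and $D_0T=TD_0$. Thus the nilpotency of $T$ and the commutation relation are already in hand; the only content left is the derivation property. Since the derivation identity is bilinear in the operator, once $D_0$ is shown to be a derivation it will follow at once that $T=D-D_0$ is a derivation as a difference of two derivations.

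First I would record the crucial feature of $D_0$: on each characteristic subspace $L_\lambda$ of the decomposition $L=L_{\rho_1}\oplus\dots\oplus L_{\rho_s}$ with respect to $D$, the operator $D_0$ acts as the scalar $\lambda$. This is the standard fact that, since $D$ restricts to $\lambda I + (\text{nilpotent})$ on the generalized eigenspace attached to $\lambda$ and $D_0=p(D)$ is a polynomial in $D$, the restriction $D_0|_{L_\lambda}$ is the diagonalizable part of $D|_{L_\lambda}$, namely $\lambda I$. This is precisely what makes $D_0$ compatible with the grading.

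The heart of the argument is then a one-line verification per pair of homogeneous components. Take $\alpha,\beta\in\Spec(D)$ and $x\in L_\alpha$, $y\in L_\beta$. By Lemma \ref{der} the product $[x,y]$ lies in $L_{\alpha+\beta}$, and it is $0$ when $\alpha+\beta$ is not an eigenvalue, in which case both sides below vanish. Hence
\[
D_0([x,y])=(\alpha+\beta)[x,y]=[\alpha x,y]+[x,\beta y]=[D_0(x),y]+[x,D_0(y)].
\]
Since the subspaces $L_{\rho_i}$ span $L$ and both sides of the derivation identity are bilinear, this shows $D_0$ is a derivation of $L$, and therefore so is $T=D-D_0$. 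Together with the properties inherited from Proposition \ref{Hump}, this yields the existence of the asserted decomposition.

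Finally, uniqueness would follow directly from the uniqueness in Proposition \ref{Hump}(i): any other decomposition $D=D_0'+T'$ into a diagonalizable derivation and a commuting nilpotent derivation is, in particular, a decomposition into a diagonalizable endomorphism and a commuting nilpotent endomorphism, so $D_0'=D_0$ and $T'=T$. The only step requiring genuine care is the compatibility of $D_0$ with the characteristic-space grading; once Lemma \ref{der} and the scalar action of $D_0$ on $L_\lambda$ are in place, the remainder is formal. I therefore expect the main (and fairly modest) obstacle to be justifying that $D_0$ acts by $\lambda$ on $L_\lambda$, rather than any deeper difficulty.
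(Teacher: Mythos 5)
Your proposal is correct and takes essentially the same approach as the paper: the paper simply defines $D_0$ directly by $D_0(x)=\rho_i x$ on each $L_{\rho_i}$ (the same operator you extract from Proposition \ref{Hump}), verifies the derivation identity via Lemma \ref{der} exactly as you do, and invokes Proposition \ref{Hump} for uniqueness.
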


\begin{proof} Let $L=L_{\rho_1}\oplus \dots \oplus L_{\rho_s}$ be a decomposition of $L$ into characteristic spaces with respect to $d$. Let us define a linear operator $D_0 \colon L\to L$ as $D_0(x)=\rho_i x$ for $x\in L_{\rho_i}$. Then $D_0$ is obviously diagonalizable and $D_0 D=DD_0$.

Now we  show that $D_0$ is a derivation of $L$.

By Lemma \ref{der} if $x\in L_{\rho_i}, y\in L_{\rho_j}$ we obtain $[x,y]\in L_{\rho_i+\rho_j}$ if $\rho_i +\rho_j $ is an eigenvalue and $[x,y]=0$ otherwise.
If $\rho_i+\rho_j$ is an eigenvalue of $D$, then we obtain
\begin{align*}
D_0([x,y])=& \ (\rho_i+\rho_j)[x,y]\, ,\\
[D_0(x),y]+[x,D_0(y)]=& \ [\rho_ix,y]+[x,\rho_j y]=(\rho_i+\rho_j)[x,y] \, .
\end{align*}
So $D_0([x,y])=[D_0(x),y]+[x,D_0(y)]$.

If $\rho_i+\rho_j$ is not an eigenvalue, then $[x,y]=0$ and again we obtain  $D_0([x,y])=0$ and $[D_0(x),y]+[x,D_0(y)]=(\rho_i +\rho_j)[x,y]=0$. Hence, $D_0$ is a derivation.

Now denote by $T=D-D_0$. Obviously, $T$ is a derivation of $L$ and $T$ is nilpotent. Moreover, $T$ commutes with $D_0$.

The uniqueness of such decomposition follows from Proposition \ref{Hump}.
\end{proof}

In order to obtain a similar result for automorphisms of Leibniz algebras we need the following lemma.
\begin{lm}\label{P^k} Let $P$ be a nilpotent transformation of Leibniz algebra $L$ such that $P+I$ is an automorphism. Then
\begin{equation}\label{3.2}
P^k([x,y])=\sum_{i=0}^k \sum_{j=0}^i C_k^i C_i^j [P^{k-j}(x),P^{k-i+j}(y)]
\end{equation}
 for all $k\in \mathbb{N}$.
\end{lm}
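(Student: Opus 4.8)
The plan is to run an induction on $k$, driven by a single multiplicative identity extracted from the hypothesis that $P+I$ is an automorphism. First I would record that identity: applying $P+I$ to $[x,y]$ and using multiplicativity gives $(P+I)([x,y])=[(P+I)(x),(P+I)(y)]$, and expanding the right-hand side and cancelling the common term $[x,y]$ yields
\begin{equation*}
P([x,y])=[P(x),y]+[x,P(y)]+[P(x),P(y)].
\end{equation*}
This is precisely \eqref{3.2} for $k=1$, so it serves simultaneously as the base case and as the engine of the induction. (Nilpotency of $P$ plays no role in the identity itself; it only guarantees that all the sums are finite.)

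Assuming \eqref{3.2} for a fixed $k$, I would compute $P^{k+1}([x,y])=P\big(P^k([x,y])\big)$ by pushing $P$ through the finite double sum by linearity and then applying the displayed identity to each bracket $[P^{k-j}(x),P^{k-i+j}(y)]$. Each such bracket splits into three, so that $P^{k+1}([x,y])$ becomes a sum of three double sums $S_1,S_2,S_3$, in which the exponents on $x$ and $y$ are shifted from $(k-j,k-i+j)$ by $(1,0)$, $(0,1)$ and $(1,1)$ respectively.

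The heart of the argument, and the step demanding the most care, is the reindexing. I would fix a target pair $(i',j')$ at level $k+1$ and determine which summand of each $S_m$ produces the bracket $[P^{k+1-j'}(x),P^{k+1-i'+j'}(y)]$. A direct matching of exponents shows that $S_1$ contributes at $(i,j)=(i'-1,j')$, that $S_2$ contributes at $(i,j)=(i'-1,j'-1)$, and that $S_3$ contributes at $(i,j)=(i',j')$, so the total coefficient of this bracket is
\begin{equation*}
C^{i'-1}_k\,C^{j'}_{i'-1}+C^{i'-1}_k\,C^{j'-1}_{i'-1}+C^{i'}_k\,C^{j'}_{i'}.
\end{equation*}
Two applications of Pascal's rule then finish the job: first $C^{j'}_{i'-1}+C^{j'-1}_{i'-1}=C^{j'}_{i'}$ combines the first two terms into $C^{i'-1}_k\,C^{j'}_{i'}$, and then $C^{i'-1}_k+C^{i'}_k=C^{i'}_{k+1}$ gives $C^{i'}_{k+1}\,C^{j'}_{i'}$, which is exactly the coefficient demanded by \eqref{3.2} at level $k+1$.

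The only subtlety I anticipate is the boundary behaviour at the extreme indices ($i'=0$, $j'=0$, or $i'=k+1$), where one or two of the three contributions are absent; this is handled cleanly by the usual convention that $C^i_k=0$ whenever $i<0$ or $i>k$, under which the coefficient identity above holds verbatim and no special cases are needed. As an independent sanity check one may note the combinatorial content of \eqref{3.2}: iterating $P([a,b])=[P(a),b]+[a,P(b)]+[P(a),P(b)]$ counts, for each bracket $[P^{a}(x),P^{b}(y)]$, the number of length-$k$ sequences of moves $(1,0)$, $(0,1)$, $(1,1)$ reaching $(a,b)$ from $(0,0)$, and the resulting multinomial coefficient $\tfrac{k!}{(k-a)!\,(k-b)!\,(a+b-k)!}$ coincides with $C^i_k\,C^j_i$ under the substitution $a=k-j$, $b=k-i+j$.
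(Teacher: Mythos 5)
Your proposal is correct and follows essentially the same route as the paper: induction on $k$, with the base case $P([x,y])=[P(x),y]+[x,P(y)]+[P(x),P(y)]$ extracted from the automorphism property of $P+I$, followed by two applications of Pascal's rule (first on the inner index to merge the $(1,0)$ and $(0,1)$ shifts, then on the outer index to absorb the $(1,1)$ shift). Your coefficient-matching at a fixed target pair $(i',j')$ is just a tidier organization of the same reindexing the paper carries out explicitly.
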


\begin{proof} Let us denote $Q=P+I$. Since $Q$ is an automorphism we obtain
\begin{multline*}
P([x,y])=(Q-I)([x,y])=[Q(x),Q(y)]-[x,y] \\
= [Q(x)-x,Q(y)-y]+[Q(x)-x,y]+[x,Q(y)-y]\\
=[P(x),P(y)]+([P(x),y]+[x,P(y)])=
\sum_{i=0}^1 \sum_{j=0}^i C_1^i C_i^j [P^{1-j}(x),P^{1-i+j}(y)] \, .
\end{multline*}

Now assume that \eqref{3.2} holds for some natural $k>1$.
Then
\begin{multline*}
P^{k+1}([x,y])=\sum_{i=0}^k \sum_{j=0}^i C_k^i C_i^j P([P^{k-j}(x),P^{k-i+j}(y)])\\
=\sum_{i=0}^k C_k^i \sum_{j=0}^i C_i^j \left([P^{k-j+1}(x),P^{k-i+j+1}(y)]+ [P^{k-j+1}(x),P^{k-i+j}(y)]+[P^{k-j}(x),P^{k-i+j+1}(y)]\right)\, .
\end{multline*}

Consider
\begin{multline*}
\sum_{j=0}^i C_i^j [P^{k-j+1}(x),P^{k-i+j}(y)]+\sum_{j=0}^i C_i^j[P^{k-j}(x),P^{k-i+j+1}(y)]\\
=C_i^0 [P^{k+1}(x),P^{k-i}(y)]+\sum_{j=1}^i\big( C_i^j[P^{k+1-j}(x),P^{k-i+j}(y)] \\
+C_i^{j-1}[P^{k+1-j}(x),P^{k-i+j}(y)]\big) +C^i_i[P^{k-i}(x),P^{k+1}(y)] \\=C_{i+1}^0 [P^{k+1}(x),P^{k+1-(i+1)}(y)]\\
+\sum_{j=1}^i\left( C_i^j+C_i^{j-1}\right)[P^{k+1-j}(x),P^{k+1-(i+1)+j}(y)]+C^{i+1}_{i+1}[P^{k+1-(i+1)}(x),P^{k+1}(y)]\,.
\end{multline*}
Using the fact $C_i^j+C_i^{j-1}=C_{i+1}^{j}$ we obtain
\begin{multline*}
\sum_{j=0}^i C_i^j [P^{k-j+1}(x),P^{k-i+j}(y)]+\sum_{j=0}^i C_i^j[P^{k-j}(x),P^{k-i+j+1}(y)]\\
=\sum_{j=0}^{i+1}C_{i+1}^j[P^{k+1-j}(x),P^{k+1-(i+1)+j}(y)] \, .
\end{multline*}
Now
\begin{multline*}
P^{k+1}([x,y])=\sum_{i=0}^k \sum_{j=0}^i C_k^i C_i^j [P^{k-j+1}(x),P^{k-i+j+1}(y)]\\
+\sum_{i=0}^k \sum_{j=0}^{i+1}C_k^i C_{i+1}^j[P^{k+1-j}(x),P^{k+1-(i+1)+j}(y)]\\
=[P^{k+1}(x),P^{k+1}(y)]+\sum_{i=0}^{k-1} \sum_{j=0}^{i+1} C_k^{i+1} C_{i+1}^j [P^{k-j+1}(x),P^{k+1-(i+1)+j}(y)]\\
+\sum_{i=0}^{k-1}\sum_{j=0}^{i+1} C_k^i C_{i+1}^j[P^{k+1-j}(x),P^{k+1-(i+1)+j}(y)]+ \sum_{j=0}^{k+1} C_{k+1}^j[P^{k+1-j}(x),P^{j}(y)]\\
=[P^{k+1}(x),P^{k+1}(y)]+\sum_{i=0}^{k-1} \sum_{j=0}^{i+1} \left(C_k^{i+1}+C_k^i\right) C_{i+1}^j [P^{k-j+1}(x),P^{k+1-(i+1)+j}(y)]\\
+\sum_{j=0}^{k+1} C_{k+1}^j[P^{k+1-j}(x),P^{j}(y)]=[P^{k+1}(x),P^{k+1}(y)]\\
+\sum_{i=0}^{k-1} \sum_{j=0}^{i+1}C_{k+1}^{i+1} C_{i+1}^j [P^{k-j+1}(x),P^{k+1-(i+1)+j}(y)]+
\sum_{j=0}^{k+1} C_{k+1}^j[P^{k+1-j}(x),P^{j}(y)]\\
=[P^{k+1}(x),P^{k+1}(y)]+\sum_{i=1}^{k} \sum_{j=0}^{i}C_{k+1}^{i} C_{i}^j [P^{k-j+1}(x),P^{k+1-i+j}(y)]\\
+\sum_{j=0}^{k+1} C_{k+1}^j[P^{k+1-j}(x),P^{j}(y)]=
\sum_{i=0}^{k+1} \sum_{j=0}^{i}C_{k+1}^{i} C_{i}^j [P^{k-j+1}(x),P^{k+1-i+j}(y)]\, .
\end{multline*}

Thus, \eqref{3.2} is proved.
\end{proof}

The next lemma presents the similar result for automorphisms of Leibniz algebras as Lemma \ref{der}  does for derivations. Notice that, it also generalizes the result for Lie algebras given in \cite{Gant}.

\begin{lm}\label{aut} Let $L$ be a finite dimensional Leibniz algebra and $L=L_{\rho_1}\oplus \dots \oplus L_{\rho_s}$ be a decomposition of $L$ into characteristic spaces with respect to an automorphism $A$. Then for any $\alpha,\beta\in \Spec(A)$ we have
\[[L_{\alpha},L_{\beta}]\subseteq\left\{\begin{array}{cl}
L_{\alpha\beta}& \textrm{ if } \alpha\beta \textrm{ is an eigenvalue of } A \\
0 & \textrm{ if } \alpha\beta \textrm{ is not an eigenvalue of } A \ . \\
\end{array}\right.  \]
\end{lm}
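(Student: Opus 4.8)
The plan is to mirror the proof of Lemma \ref{der}, replacing the additive eigenvalue $\alpha+\beta$ by the multiplicative one $\alpha\beta$. Fix $x\in L_\alpha$ and $y\in L_\beta$. Interpreting $L_{\alpha\beta}$ as $0$ when $\alpha\beta\notin\Spec(A)$, both cases of the statement amount to the single claim that $[x,y]$ lies in the generalized eigenspace of $A$ for the value $\alpha\beta$, i.e. $(A-\alpha\beta I)^m([x,y])=0$ for some $m$. Since $A$ is an automorphism, $A([a,b])=[A(a),A(b)]$, and iterating gives $A^k([x,y])=[A^k(x),A^k(y)]$ for every $k$. This replaces the derivation bookkeeping of Lemma \ref{der} and Lemma \ref{P^k} by the simpler multiplicativity of $A$.

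First I would expand both factors using the characteristic-space structure. The space $L_\alpha$ is $A$-invariant and $N:=(A-\alpha I)|_{L_\alpha}$ is nilpotent, say $N^p(x)=0$; likewise $M:=(A-\beta I)|_{L_\beta}$ is nilpotent with $M^q(y)=0$. Writing $A=\alpha I+N$ on $L_\alpha$ and $A=\beta I+M$ on $L_\beta$ and applying the binomial theorem (the summands commute) yields
\[
A^k([x,y])=\sum_{i=0}^{p-1}\sum_{j=0}^{q-1} C^i_k\,C^j_k\,\alpha^{k-i}\beta^{k-j}\,[N^i(x),M^j(y)].
\]
The essential observation is that for each fixed pair $(i,j)$ the vector $[N^i(x),M^j(y)]$ is constant in $k$, while its coefficient is $(\alpha\beta)^k$ times the fixed scalar $\alpha^{-i}\beta^{-j}$ times the quantity $C^i_k C^j_k$, which is a polynomial in $k$ of degree $i+j\le p+q-2$.

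The conclusion then follows by a finite-difference argument. Expanding $(A-\alpha\beta I)^m=\sum_{r=0}^m C^r_m(-\alpha\beta)^{m-r}A^r$, substituting the formula above and interchanging the summations, the coefficient of each $[N^i(x),M^j(y)]$ becomes $(\alpha\beta)^m\alpha^{-i}\beta^{-j}$ multiplied by $\sum_{r=0}^m C^r_m(-1)^{m-r}C^i_r C^j_r$. This last sum is exactly the $m$-th finite difference at $0$ of the polynomial $r\mapsto C^i_r C^j_r$, whose degree is $i+j$. Since an $m$-th finite difference annihilates every polynomial of degree strictly less than $m$, choosing $m=p+q-1>i+j$ makes every coefficient vanish. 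Hence $(A-\alpha\beta I)^{p+q-1}([x,y])=0$, so $[x,y]\in L_{\alpha\beta}$; and when $\alpha\beta$ is not an eigenvalue this generalized eigenspace is trivial, forcing $[x,y]=0$.

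The main obstacle is precisely this coefficient bookkeeping: recognizing the scalar sums as finite differences and verifying that the degree bound $i+j\le p+q-2$ guarantees their vanishing. This is the multiplicative counterpart of the binomial induction carried out in Lemma \ref{der} and Lemma \ref{P^k}. As a sanity check one may pass to the multiplicative Jordan decomposition $A=SU$, with $S(x)=\rho x$ on each $L_\rho$ and $U$ unipotent; there the identity $S([x,y])=[S(x),S(y)]=\alpha\beta\,[x,y]$ shows transparently that $\alpha\beta$ is the correct eigenvalue. I would not build the proof on this route, however, since it presupposes that the diagonalizable part $S$ (equivalently the unipotent part $U=P+I$ feeding Lemma \ref{P^k}) is itself an algebra automorphism, and establishing that is the genuinely delicate point; the self-contained finite-difference computation avoids it entirely.
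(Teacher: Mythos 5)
Your proof is correct, but it follows a genuinely different route from the paper's. The paper mimics its Lemma \ref{P^k}: it proves by induction the identity
\[
(A-\alpha\beta I)^k([x,y])=\sum_{j=0}^k\sum_{i=0}^j \alpha^i \beta^{j-i} C^j_k C^i_j[(A-\alpha I)^{k-i}(x),(A-\beta I)^{k-j+i}(y)],
\]
in which every summand carries a high power of $(A-\alpha I)$ on $x$ or of $(A-\beta I)$ on $y$, so taking $k=p+q$ annihilates all terms at once. You instead use only the multiplicativity $A^k([x,y])=[A^k(x),A^k(y)]$, expand $A^k$ on each $A$-invariant characteristic space by the binomial theorem for $\alpha I+N$, and then kill each coefficient of $[N^i(x),M^j(y)]$ by observing that $\sum_{r=0}^m(-1)^{m-r}C^r_m C^i_r C^j_r$ is an $m$-th finite difference of a polynomial of degree $i+j\le p+q-2<m$. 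That vanishing statement is exactly the paper's Lemma \ref{polynom} (stated there for use in Corollary \ref{sum}), so your argument reuses machinery the paper already has, at the price of dividing by $\alpha^i\beta^j$ --- harmless, since an automorphism has no zero eigenvalue, but worth saying explicitly. What your route buys is the avoidance of the rather heavy double-sum induction; what the paper's route buys is a closed-form operator identity that is uniform in $k$ and closely parallel to the derivation case of Lemma \ref{der}. Your closing remark is also well taken: building the proof on the multiplicative Jordan decomposition would be circular here, since the fact that the semisimple part is an algebra automorphism is precisely what Theorem \ref{gant2} deduces \emph{from} this lemma.
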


\begin{proof} First observe that
\begin{multline*}
(A-\alpha\beta I)([x,y])=[A(x),A(y)]-\alpha\beta[x,y]\\
=[(A-\alpha I)(x),(A-\beta I)(y)]+[(A-\alpha I)(x),\beta y]+[\alpha x,(A-\beta I)(y)] \,.
\end{multline*}
Similarly to the proof of Lemma \ref{P^k} one can establish by induction
\begin{equation}\label{3.3}
(A-\alpha\beta I)^k([x,y])=\sum_{j=0}^k\sum_{i=0}^j \alpha^i \beta^{j-i} C^j_k C^i_j[(A-\alpha I)^{k-i}(x),(A-\beta I)^{k-j+i}(y)] \, .
\end{equation}

Now let $x\in L_{\alpha}$ and $y\in L_{\beta}$. Then there exist natural numbers $p,q$ such that $(A-\alpha I)^p(x)=0$ and $(A-\beta I)^q(y)=0$.
In \eqref{3.3} taking $k=p+q$ we have that  $(A-\alpha\beta I)^k([x,y])=0$ which completes the proof of the lemma.
\end{proof}

Below, we establish a technical lemma and a corollary in order to obtain a  similar result to Theorem \ref{D=D_0+T} for automorphisms of Leibniz algebra.

\begin{lm}\label{polynom} For any polynomial $P$ of degree less than $n$, where $n \in
\mathbb{N}$, the following equality holds:
\[\sum_{i =0}^{n}(-1)^i C^{i}_{n} P(i) = 0 \,.\]
\end{lm}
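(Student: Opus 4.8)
The plan is to recognize the alternating sum $\sum_{i=0}^{n}(-1)^i C^i_n P(i)$ as, up to sign, the $n$-th finite difference of $P$ evaluated at $0$, and to exploit the fact that finite differencing lowers the degree of a polynomial. I would carry this out by a direct induction on $n$, using only Pascal's rule $C^{i-1}_n + C^i_n = C^i_{n+1}$, which is exactly the combinatorial identity already invoked earlier in the paper, together with the boundary conventions $C^i_n = 0$ for $i<0$ or $i>n$.

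For the base case $n=1$, a polynomial of degree less than $1$ is a constant $c$, and $\sum_{i=0}^{1}(-1)^i C^i_1\, c = c - c = 0$. For the inductive step, assuming the claim at level $n$, I would take an arbitrary $P$ of degree less than $n+1$, apply Pascal's rule to $C^i_{n+1}$, and split the sum into two pieces. The boundary terms drop out because $C^{n+1}_n = 0$ and $C^{-1}_n = 0$, and after reindexing the second piece by $j = i-1$ the identity collapses to
\[
\sum_{i=0}^{n+1}(-1)^i C^i_{n+1} P(i)=\sum_{i=0}^{n}(-1)^i C^i_n\big(P(i)-P(i+1)\big).
\]
Setting $Q(x)=P(x)-P(x+1)$, the right-hand side is precisely the sum associated with $Q$ at level $n$.

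The one point that genuinely requires the hypothesis on the degree is the observation that $Q(x)=P(x)-P(x+1)$ has degree strictly smaller than that of $P$: the leading terms cancel, so $\deg Q \le \deg P - 1 < n$. Consequently $Q$ has degree less than $n$, the induction hypothesis applies to $Q$, and the displayed sum vanishes, completing the step. I do not anticipate any serious obstacle; the only subtlety is bookkeeping, namely keeping the binomial boundary conventions consistent through the split and the reindexing so that no spurious terms survive.
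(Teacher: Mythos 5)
Your proof is correct, but it takes a genuinely different route from the paper. The paper proves the identity in one shot via Lagrange interpolation: since $\deg P < n$, it writes $P(x)=\sum_{k=0}^{n-1}q_k(x)P(k)$ using the nodes $0,1,\dots,n-1$, evaluates the basis polynomials $q_k$ at $x=n$ to find $q_k(n)=(-1)^{n-1}(-1)^kC^k_n$, and rearranges to obtain the alternating sum. You instead recognize the sum as $(-1)^n\Delta^nP(0)$, where $\Delta$ is the forward difference operator, and run an induction on $n$ driven by Pascal's rule, reducing the level-$(n+1)$ sum for $P$ to the level-$n$ sum for $Q(x)=P(x)-P(x+1)$; the key point, which you correctly isolate, is that $\deg Q\le\deg P-1<n$, so the induction hypothesis applies (and the degenerate case where $P$ is constant, hence $Q=0$, is harmless). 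Your computation of the inductive step checks out: the boundary terms vanish because $C^{n+1}_n=C^{-1}_n=0$, and the reindexing $j=i-1$ produces exactly the displayed collapse. Your argument is more elementary and self-contained --- it needs only Pascal's rule, which the paper already uses repeatedly in Lemmas \ref{der} and \ref{P^k} --- whereas the paper's interpolation argument avoids induction entirely at the cost of an explicit coefficient computation. Either proof is complete and would serve the application in Corollary \ref{sum} equally well.
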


\begin{proof} Since $\deg P(x)<n$, applying Lagrange interpolation
formula to the points $x_k=k,\, 0\leq k \leq n-1$ we obtain
$P(x)=\displaystyle \sum_{k=0}^{n-1} q_k(x) P(k)$, where $q_k(x)=\displaystyle \frac{x(x-1)\cdots \big(x-(k-1)\big) \cdot \big(x-(k+1)\big) \cdots
\big(x-(n-1)\big)}{k(k-1)\cdots 1\cdot(-1)(-2)\cdots\big(-(n-1-k)\big)}$.

Now
\begin{multline*}
q_k(n)=\displaystyle\frac{n(n-1)\cdots\big(n-(k-1)\big)\cdot\big(n-(k+1)\big)\cdots
\big(n-(n-1)\big)}{k(k-1)\cdots 1\cdot(-1)(-2)\cdots\big(-(n-1-k)\big)}\\
=\frac{n!}{(-1)^{n-1-k} k! (n-k)!}=\frac1{(-1)^{n-1}}\cdot(-1)^{k}C^k_n \,.
\end{multline*}

Thus $P(n)=\displaystyle\sum_{k=0}^{n-1}q_k(n)P(k)=\frac1{(-1)^{n-1}}\sum_{k=0}^{n-1}(-1)^{k}C^k_n\cdot
P(k)$.

Hence, $0=\displaystyle \sum_{k=0}^{n-1}(-1)^{k}C^k_n\cdot
P(k)+(-1)^nC^n_nP(n)=\displaystyle\sum_{i = 0}^{n}(-1)^i
C^{i}_{n} P(i)$. \end{proof}

\begin{cor}\label{sum} Let $n,m$ be non-negative integers such that $n< m$. Then
 \[
 \sum_{i=0}^n \frac{(-1)^i}{m-i}C_n^iC_{m-i}^n=
\left\{\begin{array}{cl}
\frac1m & \textrm{ if }  n=0 \\
0 & \textrm{ otherwise}\,  .\\
\end{array} \right .
\]
\end{cor}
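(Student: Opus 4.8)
The plan is to reduce the identity to Lemma \ref{polynom} by recognizing that, apart from the sign $(-1)^i$ and the factor $C^i_n$, the summand is the value at $i$ of a single polynomial of degree $n-1$. First I would dispose of the case $n=0$ by direct computation: the sum collapses to its only term (the one with $i=0$), which is $\frac{1}{m}\,C^0_0\,C^0_m=\frac1m$, matching the claimed value.

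For $n\geq 1$ the key observation is that, since $i$ ranges over $0\le i\le n$ and $n<m$, we have $m-i\ge m-n\ge 1$, so no denominator vanishes and we may cancel one factor:
\[
\frac{1}{m-i}\,C^n_{m-i}=\frac{1}{m-i}\cdot\frac{(m-i)(m-i-1)\cdots(m-i-n+1)}{n!}=\frac{(m-i-1)(m-i-2)\cdots(m-i-n+1)}{n!}\,.
\]
Denoting the right-hand side by $P(i)$, I note that $P$ is a product of the $n-1$ linear factors $(m-i-1),\dots,(m-i-n+1)$, hence a polynomial in $i$ of degree exactly $n-1$. One should also confirm that this factorisation is valid even when $m-i<n$, where $C^n_{m-i}=0$: in that regime one of the factors $(m-i-j)$ equals zero, so both sides vanish and the formula persists. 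Thus $P$ is well defined on the whole range and $\deg P=n-1<n$.

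With this notation the sum in the statement is precisely $\sum_{i=0}^n(-1)^i C^i_n\,P(i)$, and because $\deg P<n$, Lemma \ref{polynom} applies directly and gives the value $0$, which is exactly the asserted value for $n\neq 0$. I do not expect any serious obstacle here; the only point needing a little care is verifying that the cancellation of the factor $m-i$ leaves a genuine polynomial of degree $n-1$ (rather than $n$), since it is precisely this drop in degree that makes Lemma \ref{polynom} applicable.
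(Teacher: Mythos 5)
Your proposal is correct and follows essentially the same route as the paper: both recognize $\frac{1}{m-i}C^n_{m-i}$ as a polynomial $P(i)$ of degree $n-1$ and invoke Lemma \ref{polynom}. The only cosmetic difference is that the paper treats $n=1$ as a separate direct computation while you absorb it into the general argument (and you additionally verify the factorisation when $C^n_{m-i}=0$), which is fine.
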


\begin{proof} Let $n>1$ and consider the polynomial
\[P(x)=\frac{1}{n!}(m-1-x)(m-2-x)\cdots(m-(n-1)-x)=\frac{1}{m-x}\cdot C_{m-x}^n\]
 of degree $n-1$.

By Lemma \ref{polynom} we obtain
\[0=\sum_{i=0}^{n}(-1)^i C^{i}_{n} P(i)= \sum_{i=0}^n \frac{(-1)^i}{m-i}C_n^iC_{m-i}^n \,.\]

For $n=0,1$, simple calculations verify the statement of the corollary.
\end{proof}

The following result shows that the analogous one established for Lie algebras \cite{Gant} is also valid for Leibniz algebras.

\begin{thm}\label{gant2}
 Let $A$ be an automorphism of a Leibniz algebra. Then there exists a unique diagonalizable automorphism $A_0$ and a unique  nilpotent derivation $T$ such that $A=A_0\exp(T)$ and $A_0 T=TA_0$.
\end{thm}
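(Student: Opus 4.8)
The plan is to reduce the statement to the \emph{multiplicative} Jordan--Chevalley decomposition of the linear operator $A$ and then to verify by hand that the unipotent factor is the exponential of a nilpotent derivation. First I would apply the multiplicative version of Proposition~\ref{Hump} (which follows from the additive one, since invertibility of $A$ forces its diagonalizable part to be invertible): write $A = A_0 Q$ with $A_0$ diagonalizable, $Q = I + P$ unipotent and $P$ nilpotent, $A_0 Q = Q A_0$, and both $A_0,Q$ polynomials in $A$. Setting $T = \log Q = \sum_{k\ge 1}\frac{(-1)^{k-1}}{k}P^k$ (a finite sum, hence nilpotent) gives $\exp(T) = Q$ and $A = A_0\exp(T)$; since $T$ is a polynomial in $A$, the relation $A_0 T = T A_0$ is automatic.

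Next I would check that $A_0$ is an automorphism. Decomposing $L = L_{\rho_1}\oplus\dots\oplus L_{\rho_s}$ into characteristic subspaces for $A$, the operator $A_0$ acts as the scalar $\rho_i$ on $L_{\rho_i}$; as $A$ is invertible every $\rho_i\ne 0$, so $A_0$ is invertible. For $x\in L_\alpha,\,y\in L_\beta$, Lemma~\ref{aut} gives $[x,y]\in L_{\alpha\beta}$ (or $[x,y]=0$), whence $A_0([x,y]) = \alpha\beta[x,y] = [A_0(x),A_0(y)]$, so $A_0$ is a bijective endomorphism, i.e.\ an automorphism. Consequently $Q = A_0^{-1}A$ is a product of automorphisms, hence an automorphism, and $P = Q - I$ is nilpotent because $Q$ is unipotent.

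The heart of the argument, and the step I expect to be the main obstacle, is to show that $T$ is a \emph{derivation}. I would substitute the expansion of Lemma~\ref{P^k} into $T([x,y]) = \sum_{k\ge 1}\frac{(-1)^{k-1}}{k}P^k([x,y])$ and collect, for each pair $a,b\ge 0$, the coefficient of $[P^a(x),P^b(y)]$. Writing the power on $x$ as $a = k-j$ and on $y$ as $b = k-i+j$ forces $i = 2k-a-b$ and $j = k-a$ and restricts $k$ to $\max(a,b)\le k\le a+b$, so the coefficient equals $\sum_k \frac{(-1)^{k-1}}{k}C_k^{2k-a-b}C_{2k-a-b}^{k-a}$. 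Rewriting each summand in factorial form as $\frac{(-1)^{k-1}(k-1)!}{(a+b-k)!\,(k-a)!\,(k-b)!}$ and reindexing by $i = a+b-k$, this becomes $(-1)^{a+b-1}$ times the alternating sum of Corollary~\ref{sum} with parameters $n=a$ and $m=a+b$; hence it vanishes whenever $0<a<a+b$, i.e.\ whenever $a,b\ge 1$. The surviving terms are precisely $a\ge 1,\,b=0$ (only $k=a$ contributes, with coefficient $\frac{(-1)^{a-1}}{a}$) and $a=0,\,b\ge 1$ (coefficient $\frac{(-1)^{b-1}}{b}$), and these reassemble into $[T(x),y]+[x,T(y)]$. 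This yields the derivation identity for $T$, and the whole difficulty is the combinatorial bookkeeping that Lemma~\ref{polynom} and Corollary~\ref{sum} are designed to handle.

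Finally, uniqueness follows from uniqueness in the multiplicative Jordan--Chevalley decomposition: if $A = A_0\exp(T) = A_0'\exp(T')$ with $A_0,A_0'$ diagonalizable automorphisms and $T,T'$ nilpotent derivations commuting with their partners, then $\exp(T),\exp(T')$ are commuting unipotent operators, so $A_0 = A_0'$ and $\exp(T) = \exp(T')$; since $\log$ is a bijection from unipotent to nilpotent operators, $T = T'$. I should remark that one can avoid the combinatorics altogether: writing the bracket as $B\colon L\otimes L\to L$, the automorphism relation reads $Q\circ B = B\circ(Q\otimes Q)$, so $B$ intertwines the unipotent operators $Q$ and $Q\otimes Q$, hence also their logarithms $\log Q$ and $\log Q\otimes I + I\otimes\log Q$, which is exactly the assertion that $T=\log Q$ is a derivation. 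The elementary route above is, however, the one supported by the lemmas already proved.
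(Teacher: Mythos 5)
Your proposal is correct and follows essentially the same route as the paper: the same $A_0$ acting by the scalar $\rho_i$ on each characteristic subspace (shown to be an automorphism via Lemma~\ref{aut}), $T=\log(A_0^{-1}A)$ verified to be a derivation by substituting Lemma~\ref{P^k} into the logarithm series and killing the cross terms with Corollary~\ref{sum}, and uniqueness reduced to Proposition~\ref{Hump}; your bookkeeping by the coefficient of $[P^a(x),P^b(y)]$ is just a transparent reorganization of the paper's $B_{k,r}$ grouping and checks out. Your closing remark that $Q\circ B=B\circ(Q\otimes Q)$ allows one to take logarithms of commuting unipotents and bypass the combinatorics entirely is a genuinely cleaner alternative that the paper does not pursue.
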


\begin{proof}
Let $L=L_{\rho_1}\oplus \dots \oplus L_{\rho_s}$ be a decomposition of a Leibniz algebra $L$ into characteristic spaces with respect to $A$.

Let us define a linear operator $A_0 \colon L\to L$ as $A_0(x)=\rho_i x$ for $x\in L_{\rho_i}$. Then $A_0$ is obviously diagonalizable and $A_0 A=AA_0$.
Notice that if $x\in L_{\rho_i}, y\in L_{\rho_j}$ then $[A_0(x),A_0(y)])=\rho_i \rho_j [x,y]$ and
by Lemma \ref{aut} we have $[x,y]\in L_{\rho_i \rho_j}$. Therefore, $A_0([x,y])=\rho_i \rho_j [x,y]$, which implies that $A_0$ is an automorphism.

Let us denote by $Q=A_0^{-1}A$. Then $A=A_0 Q$ and $A_0 Q=QA_0$. Also note that $\Spec(Q)=\{1\}$.

Consider $P=Q-I$. Obviously, $P$ is nilpotent and hence $\log Q=\log(I+P)=P-\frac12P^2+\dots+\frac{(-1)^{n-1}}{n}P^n+\cdots$ diverges.

Since $P$ is nilpotent, $\log Q$ is also a nilpotent transformation. We will prove that $\log (I+P)$ is a derivation, i.e.,
\begin{equation}\label{3.4}
\sum_{k=1}^{\infty} \frac{(-1)^{k-1}}{k}P^k([x,y])=\Big[\sum_{k=1}^{\infty} \frac{(-1)^{k-1}}{k}P^k(x),y \Big]+\Big[x,\sum_{k=1}^{\infty} \frac{(-1)^{k-1}}{k}P^k(y)\Big] \, .
\end{equation}
By Lemma \ref{P^k}, formula \eqref{3.4} is valid for $P$. Putting $C_k^i=C_k^{k-i}$ and
 substituting $r=k-i$, we obtain
$\displaystyle P^k([x,y])=\sum_{r=0}^k \sum_{j=0}^{k-r} C_k^{r} C_{k-r}^j [P^{k-j}(x),P^{r+j}(y)]$.

Now denote by
$\displaystyle B_{k,r}=\sum_{j=0}^{k-r} C_k^r C_{k-r}^j [P^{k-j}(x),P^{j+r}(y)]$ for all $0\leq r \leq k$.

Then $P^k([x,y])=B_{k,0}+B_{k,1}+\cdots+B_{k,k}$.

Therefore,
\begin{multline*}
\sum_{k=1}^{\infty} \frac{(-1)^{k-1}}{k}P^k([x,y])=\sum_{k=1}^{\infty} \frac{(-1)^{k-1}}{k}(B_{k,0}+B_{k,1}+\dots+B_{k,k})\\
= \sum_{m=0}^{\infty} \Bigg( \frac{1}{2m+1}B_{2m+1,0}-\frac{1}{2m}B_{2m,1}+\dots+\frac{(-1)^m}{m+1}B_{m+1,m}\Bigg)\\
- \sum_{m=1}^{\infty} \Bigg( \frac{1}{2m}B_{2m,0}-\frac{1}{2m-1}B_{2m-1,1}+\dots+\frac{(-1)^m}{m}B_{m,m}\Bigg)\\
= \sum_{m=0}^\infty \Bigg( \sum_{t=0}^{m}\frac{(-1)^t}{2m+1-t} B_{2m+1-t,t}\Bigg)-\sum_{m=1}^\infty \Bigg( \sum_{t=0}^{m}\frac{(-1)^t}{2m-t} B_{2m-t,t}\Bigg)\\
= \sum_{m=0}^\infty \Bigg( \sum_{t=0}^{m}\frac{(-1)^t}{2m+1-t}
\sum_{j=0}^{2m+1-2t} C_{2m+1-t}^t C_{2m+1-2t}^j [P^{2m+1-t-j}(x),P^{j+t}(y)]\Bigg)\\
- \sum_{m=1}^\infty \Bigg( \sum_{t=0}^{m}\frac{(-1)^t}{2m-t}
\sum_{j=0}^{2m-2t} C_{2m-t}^t C_{2m-2t}^j [P^{2m-t-j}(x),P^{j+t}(y)]\Bigg)
\end{multline*}
\begin{multline*}
=  \sum_{m=0}^\infty \left( \frac1{2m+1}[P^{2m+1}(x),y]+\sum_{s=1}^{2m} \Bigg(
\sum_{t=0}^{s} \frac{(-1)^t}{2m+1-t} C^{t}_{2m+1-t}
C^{s-t}_{2m+1-2t} \Bigg) \right. \\
 \left. \cdot \ [P^{2m+1-s}(x),P^s(y)]+\frac{1}{2m+1}[x,P^{2m+1}(y)]\right) -\sum_{m=1}^\infty \left( \frac{1}{2m}[P^{2m}(x),y] \right. \\
 +\left.  \sum_{s=1}^{2m-1} \Bigg(
\sum_{t=0}^{s} \frac{(-1)^t}{2m-t}
C^{t}_{2m-t} C^{s-t}_{2m-2t}
\Bigg)[P^{2m-s}(x),P^s(y)]+\frac{1}{2m}[x,P^{2m}(y)] \right).
\end{multline*}

Now since $\displaystyle C^{t}_{2m+1-t} C^{s-t}_{2m+1-2t}=C^{t}_{s} C^{s}_{2m+1-t}$ and  $\displaystyle C^{t}_{2m-t} C^{s-t}_{2m-2t}=C^{t}_{s} C^{s}_{2m-t}$ we obtain
\begin{align*}
\sum_{t=0}^{s} \frac{(-1)^t}{2m+1-t}
C^{t}_{2m+1-t} C^{s-t}_{2m+1-2t}= & \ \sum_{t=0}^{s} \frac{(-1)^t}{2m+1-t}C^{t}_{s} C^{s}_{2m+1-t} \,, \\
\sum_{t=0}^{s} \frac{(-1)^t}{2m-t}
C^{t}_{2m-t} C^{s-t}_{2m-2t}= & \ \sum_{t=0}^{s} \frac{(-1)^t}{2m-t}C^{t}_{s} C^{s}_{2m-t}\, .
\end{align*}

However, by Corollary \ref{sum} the last sums are zero for all $1\leq s\leq 2m \ (1\leq s\leq 2m-1$, respectively). Hence,
\begin{multline*}
\sum_{k=1}^{\infty} \frac{(-1)^{k-1}}{k}P^k([x,y])=
\sum_{n=1}^\infty \frac{(-1)^{n-1}}{n}\left([P^{n}(x),y]+[x,P^n(y)]\right) \\
=[\sum_{n=1}^\infty \frac{(-1)^{n-1}}{n}P^{n}(x),y]+ [x,\sum_{n=1}^\infty \frac{(-1)^{n-1}}{n}P^{n}(y)]
\end{multline*}
and \eqref{3.4} is proved.

Thus, $T=\log Q$ is a nilpotent derivation of $L$ and $A=A_0\exp(T), \ A_0T=TA_0$.
Now since $\exp(T)-I$ is nilpotent, we obtain the additive Jordan--Chevalley decomposition $A=A_0+A_0(\exp(T)-I)$ of $A$. Therefore, by Proposition \ref{Hump} $A_0$, and as consequence $T$, are determined uniquely.
\end{proof}

The following theorems generalize the results from the theory of Lie algebras \cite{Jac1} to Leibniz algebras.
\begin{thm}\label{nilpotent}
 Let $L$ be a finite-dimensional complex Leibniz algebra which admits a non-degenerated derivation. Then $L$ is a nilpotent algebra. \end{thm}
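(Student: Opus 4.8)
The plan is to reduce everything to Engel's theorem (Theorem \ref{Engel}): it suffices to show that $R_x$ is nilpotent for every $x\in L$. Let $d$ be the given non-degenerate (i.e.\ nonsingular) derivation and decompose $L=L_{\rho_1}\oplus\dots\oplus L_{\rho_s}$ into characteristic subspaces with respect to $d$. Since $d$ is invertible, $0$ is not an eigenvalue, so every $\rho_i\neq 0$; in particular $L_0=0$.

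First I would handle \emph{homogeneous} elements, those lying in a single $L_\alpha$. By Lemma \ref{der} we have $[L_\alpha,L_\beta]\subseteq L_{\alpha+\beta}$, with the convention $L_\gamma=0$ whenever $\gamma\notin\Spec(d)$. Hence for $x\in L_\alpha$ the operator $R_x$ raises the grading by $\alpha$, so that $R_x^k(L_\beta)\subseteq L_{\beta+k\alpha}$ for all $k$. Because $\Spec(d)$ is a finite set, $\alpha\neq 0$, and we work over $\mathbb{C}$, the values $\beta+k\alpha$ ($k=0,1,2,\dots$) are pairwise distinct, so for each $\beta$ only finitely many of them lie in $\Spec(d)$; choosing $k$ large enough, uniformly over the finitely many $\beta$, forces $R_x^k=0$. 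Thus $R_x$ is nilpotent for every homogeneous $x$.

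The main obstacle is the passage to arbitrary $x$: writing $x=\sum_i x_{\rho_i}$ gives $R_x=\sum_i R_{x_{\rho_i}}$, a sum of nilpotent operators, which in general need not be nilpotent, so no direct conclusion is available. This is exactly what Jacobson's weakly closed set theorem (Theorem \ref{closed}) is designed to overcome. I would consider the set $S=\{R_x \mid x\text{ homogeneous}\}$ of right multiplications by elements of a single characteristic subspace. Using \eqref{2.1} in the form $R_xR_y-R_yR_x=R_{[y,x]}$ together with the grading $[L_\alpha,L_\beta]\subseteq L_{\alpha+\beta}$, the commutator of two elements of $S$ is again of the form $R_z$ with $z$ homogeneous, hence lies in $S$; so $S$ is weakly closed (taking $\gamma(a,b)=-1$). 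Every element of $S$ is nilpotent by the previous paragraph, so Theorem \ref{closed} applies and the enveloping associative algebra $S^\ast$ is nilpotent.

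Finally I would conclude as follows. For arbitrary $x\in L$ the operator $R_x=\sum_i R_{x_{\rho_i}}$ is a sum of elements of $S\subseteq S^\ast$, and since $S^\ast$ is a subalgebra we get $R_x\in S^\ast$ for every $x$. As every element of a nilpotent associative algebra is itself nilpotent, $R_x$ is nilpotent for all $x\in L$, and Theorem \ref{Engel} then yields that $L$ is nilpotent. The only delicate point throughout is the transition from homogeneous to general elements, which the weakly closed set theorem resolves precisely; the non-degeneracy of $d$ enters solely to guarantee $\alpha\neq0$ in the grading-shift argument, which is what makes the homogeneous right multiplications nilpotent in the first place.
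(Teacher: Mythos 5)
Your proposal is correct and follows essentially the same route as the paper: decompose $L$ into characteristic subspaces of $d$, use Lemma \ref{der} and the non-vanishing of the eigenvalues to show each homogeneous right multiplication is nilpotent, then apply Jacobson's weakly closed set theorem via identity \eqref{2.1} and conclude with Engel's theorem. Your write-up is in fact slightly more explicit than the paper's at the last step, where it spells out that $R_x=\sum_i R_{x_{\rho_i}}$ lies in the nilpotent enveloping algebra $S^\ast$ and is therefore itself nilpotent.
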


\begin{proof} Let $d$ be a non-singular derivation of a Leibniz algebra $L$ and $L=L_{\rho_1}\oplus L_{\rho_2}\oplus\dots\oplus L_{\rho_k}$ be a decomposition of $L$ into characteristic spaces with respect to $d$.

Let $\alpha,\beta \in \Spec(d)$. Then by Lemma \ref{der} we have $\displaystyle [\dots[[L_{\alpha},\underbrace{ L_{\beta}],L_{\beta}],\dots,L_{\beta}]}_{k-\textrm{times}}\subseteq L_{\alpha+k\beta}$. Since for sufficiently large $k\in \mathbb{N}$ we have $\alpha+k\beta\not \in \Spec(d)$, and by Lemma \ref{der} we obtain $[\dots[[L_{\alpha}, L_{\beta}],L_{\beta}],\dots,L_{\beta}]=0$.

Thus, for $x\in L_{\beta}$ any right multiplication operator $R_x$ is nilpotent, and due to the fact that $\alpha, \beta$ were taken arbitrarily,
 it follows that every operator from $\bigcup_{i=1}^kR(L_{\rho_i})$ is nilpotent.

Now from identity \eqref{2.1} and Lemma \ref{der} it follows that $\bigcup_{i=1}^kR(L_{\rho_i})$ is a weakly closed set of an associative algebra $R(L)$. Hence, by Theorem \ref{closed} it follows that every operator from $R(L)$ is nilpotent.

Now by Theorem \ref{Engel} we obtain the result, i.e., $L$ is nilpotent.\end{proof}

\begin{rem} The following family $L(\beta)=\langle e_1,\dots, e_n\rangle$ of characteristically nilpotent Leibniz algebras, i.e. algebras in which every derivation is nilpotent, with the following multiplication
\begin{align*}
[e_0,e_0] = & \ e_2, \qquad \qquad  [e_i,e_0]=e_{i+1} \qquad \qquad  (1\le i\le n-1) \, ,\\
[e_0,e_1]= & \ \alpha_3e_3+\alpha_4e_4+\cdots+\alpha_{n-1}e_{n-1}+\theta e_n \, ,\\
 [e_i,e_1]= & \  \alpha_3e_{i+2}+\alpha_4e_{i+3}+ \cdots +\alpha_{n+1-i}e_n  \quad (1\le i\le n-2) \, ,
\end{align*}
 where $(\alpha_3, \dots, \alpha_n,\ \theta \in \mathbb{C})$ and $\alpha_i\alpha_j\neq 0$ for some $3\leq i\neq j\leq n$,
was constructed in \cite{Omirov}. This implies that the statement of Theorem \ref{nilpotent} in the opposite direction does not hold.
\end{rem}

\begin{thm}\label{aut_prime}
 Let $L$ be a finite dimensional complex Leibniz such that it admits an automorphism of prime order with no fixed-points. Then $L$ is a nilpotent algebra.
\end{thm}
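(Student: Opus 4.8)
The plan is to follow closely the proof of Theorem \ref{nilpotent}, replacing the additive grading that a non-singular derivation provides by the multiplicative $\mathbb{Z}/p\mathbb{Z}$-grading induced by a fixed-point-free automorphism of prime order $p$. First I would record two consequences of the hypotheses. Since $A$ has order $p$, we have $A^p = I$, so the minimal polynomial of $A$ divides $t^p-1$; as this polynomial has distinct roots, $A$ is diagonalizable and every eigenvalue is a $p$-th root of unity. Writing $\zeta = e^{2\pi i /p}$, the spectrum of $A$ lies in $\{1,\zeta,\dots,\zeta^{p-1}\}$. The absence of non-trivial fixed points means precisely that $1\notin\Spec(A)$, since a fixed vector is an eigenvector for the eigenvalue $1$. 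Decomposing $L=\bigoplus_{i=1}^{p-1} L_{\zeta^i}$ into characteristic spaces with respect to $A$ (with the convention that $L_{\zeta^i}=0$ whenever $\zeta^i\notin\Spec(A)$, and $L_1=0$), Lemma \ref{aut} yields $[L_{\zeta^i},L_{\zeta^j}]\subseteq L_{\zeta^{i+j}}$ with the exponents read modulo $p$. In particular $[L_{\zeta^i},L_{\zeta^j}]=0$ whenever $i+j\equiv 0\pmod p$, because then $\zeta^{i+j}=1$ is not an eigenvalue.

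Next I would show that $R_x$ is nilpotent for every homogeneous $x$. Fix $x\in L_{\zeta^j}$ with $j\not\equiv 0\pmod p$. By the grading, $R_x$ maps $L_{\zeta^i}$ into $L_{\zeta^{i+j}}$, so $R_x^{k}$ maps $L_{\zeta^i}$ into $L_{\zeta^{i+kj}}$. Because $p$ is prime and $j\not\equiv 0$, the residue $j$ is invertible modulo $p$, so for each $i\in\{1,\dots,p-1\}$ there is a least $k_0=k_0(i)\in\{1,\dots,p-1\}$ with $i+k_0 j\equiv 0\pmod p$; at this step $R_x^{k_0}(L_{\zeta^i})\subseteq L_1=0$. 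Consequently $R_x^{p-1}$ annihilates every $L_{\zeta^i}$, hence $R_x^{p-1}=0$ and $R_x$ is nilpotent. This is the analogue, for automorphisms, of the nilpotency of $R_x$ for $x\in L_\beta$ established inside the proof of Theorem \ref{nilpotent}.

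Finally I would run the weakly closed set argument as in Theorem \ref{nilpotent}. Set $S=\bigcup_{i=1}^{p-1} R(L_{\zeta^i})$. Every element of $S$ is nilpotent by the previous step. For $x\in L_{\zeta^i}$ and $y\in L_{\zeta^j}$, identity \eqref{2.1} gives $R_xR_y-R_yR_x=R_{[y,x]}$, and by Lemma \ref{aut} we have $[y,x]\in L_{\zeta^{i+j}}$, so $R_{[y,x]}\in R(L_{\zeta^{i+j}})\subseteq S$; taking $\gamma(R_x,R_y)=-1$ shows that $S$ is a weakly closed subset of the associative algebra $R(L)$. Theorem \ref{closed} then makes the enveloping algebra $S^{*}$ nilpotent; since every $R_z$ with $z\in L$ is a sum of elements of $S$, we have $R(L)\subseteq S^{*}$, so each $R_z$ is nilpotent. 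Engel's theorem (Theorem \ref{Engel}) now gives that $L$ is nilpotent.

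The part requiring care, and the only genuinely new input relative to Theorem \ref{nilpotent}, is the passage from the hypotheses to the $\mathbb{Z}/p\mathbb{Z}$-grading with vanishing degree-zero component: one must use primality of the order to force the eigenvalues to be $p$-th roots of unity and to guarantee that the cyclic shift by $j$ sweeps through the degree-zero slot, and one must read the fixed-point-free hypothesis correctly as $L_1=0$. Once this grading is in place, the remainder is a direct transcription of the weakly closed set and Engel argument already used for derivations.
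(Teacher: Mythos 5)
Your proposal is correct and follows essentially the same route as the paper: decompose $L$ into characteristic spaces of $A$, use Lemma \ref{aut} together with the fact that $\alpha\beta^k=1\notin\Spec(A)$ for some $k$ to get nilpotency of $R_x$ for homogeneous $x$, and then conclude via the weakly closed set argument and Engel's theorem exactly as in Theorem \ref{nilpotent}. The only difference is that you spell out the diagonalizability of $A$ and the explicit bound $R_x^{p-1}=0$, which the paper leaves implicit.
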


\begin{proof} Let $A$ be an automorphism of Leibniz algebra $L$ with the properties given in the statement of the theorem. Since $A$ has no fixed points then $1$ is not an eigenvalue of $A$.

Let $L=L_{\rho_1}\oplus L_{\rho_2}\oplus\dots\oplus L_{\rho_k}$ be a decomposition of $L$ into characteristic spaces with respect to $A$. From the condition that $A$ is an automorphism of prime order we obtain that the spectrum of $A$ consists of primitive $p-$th roots of the unity.  Therefore, for any $\alpha,\beta\in \Spec(A)$ there exists $k\in \mathbb{N}$ such that $\alpha \beta^k=1\not \in \Spec(A)$. Hence, by Lemma \ref{aut} we obtain
\[ [\dots[[L_{\alpha},\underbrace{ L_{\beta}],L_{\beta}],\dots,L_{\beta}]}_{k-\textrm{times}}\subseteq L_{\alpha\beta^k}=0 \, .\]

Thus, for $x\in L_{\beta}$ any right multiplication operator $R_x$  is nilpotent, and similarly as in the proof of Theorem \ref{nilpotent} we obtain that $L$ is nilpotent.
\end{proof}

Let $D$ be a derivation of a Leibniz algebra $L$ such that $D$ commutes with any inner derivation. Then $D(L)\subseteq \Ann_R(L)$.
Indeed, since $D$ commutes with any right multiplication operator we have $[D(x),y]=(R_y\circ D)(x)=(D\circ R_y)(x)=D([x,y])=[D(x),y]+[x,D(y)]$ which implies $[x,D(y)]=0$ for any $x,y\in L$. Thus, $[L,D(L)]=0$ and $D(L)\subseteq \Ann_R(L)$.

\begin{lm}\label{J+D(J)} Let $J$ be an ideal of Leibniz algebra $L$ and $D$ be a derivation given on $L$. Then $J+D(J)$ is also an ideal of $L$.
\end{lm}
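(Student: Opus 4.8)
The plan is to show directly that $M := J + D(J)$ absorbs products with $L$ on both sides. Because a Leibniz algebra is neither commutative nor anti-symmetric, the ideal condition splits into two separate inclusions, $[L,M]\subseteq M$ and $[M,L]\subseteq M$, and I would establish each one independently, using the derivation identity together with the two-sided ideal property $[L,J]\subseteq J$ and $[J,L]\subseteq J$. Since $J\subseteq M$ and $[L,J]\subseteq J$, $[J,L]\subseteq J$, the contributions coming from $J$ itself are already inside $M$, so in each case only the term involving $D(J)$ requires attention.

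First I would handle $[L,M]\subseteq M$, where the only nontrivial piece is $[a,D(j)]$ with $a\in L$ and $j\in J$. The idea is to apply $D$ to the element $[a,j]\in J$ and rearrange the derivation identity: from $D([a,j])=[D(a),j]+[a,D(j)]$ one solves for $[a,D(j)]=D([a,j])-[D(a),j]$. Both terms on the right lie in $M$: the first belongs to $D(J)$ because $[a,j]\in J$, and the second belongs to $[L,J]\subseteq J$.

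The inclusion $[M,L]\subseteq M$ I would treat symmetrically, applying the derivation identity in the opposite order. From $D([j,a])=[D(j),a]+[j,D(a)]$ we get $[D(j),a]=D([j,a])-[j,D(a)]$, where $D([j,a])\in D(J)$ since $[j,a]\in [J,L]\subseteq J$, and $[j,D(a)]\in [J,L]\subseteq J$. Hence $[D(j),a]\in M$, which together with $[j,a]\in J\subseteq M$ closes the argument.

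I do not expect any genuine obstacle here: each inclusion is a one-line rearrangement of the Leibniz derivation identity. The only point deserving care is that, unlike in the Lie case, neither inclusion follows from the other by anti-symmetry, so the left and right absorption properties must both be verified explicitly, and the full two-sided ideal hypothesis on $J$ is used in doing so.
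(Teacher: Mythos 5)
Your proposal is correct and follows essentially the same route as the paper: both arguments rearrange the derivation identity to write $[a,D(j)]=D([a,j])-[D(a),j]$ and $[D(j),a]=D([j,a])-[j,D(a)]$, then use the two-sided ideal property of $J$ to place each term in $J+D(J)$. Your explicit remark that left and right absorption must be checked separately is exactly the point the paper's two displayed computations address.
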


\begin{proof} Since for any $x\in J, y\in L$ we have
\[
[y,D(x)]=D([x,y])-[D(x),y]\in D([J,L])+[J,L]\subseteq D(J)+J \, ,
\] and so $[L,D(J)]\subseteq D(J)+J$. Therefore, $[L,J+D(J)]\subseteq J+D(J)$.

Similarly, since for any $x\in J, y\in L$ we have
\[[D(x),y]=D([x,y])-[x,D(y)]\in D([J,L])+[J,L]\subseteq D(J)+J \, ,\]
 and so
$[D(J),L]\subseteq D(J)+J$. Therefore, $[J+D(J),L]\subseteq J+D(J)$. This implies that $J+D(J)$ is an ideal of $L$.
\end{proof}

\begin{thm}\label{D(J)} Let $J$ be the solvable radical of a Leibniz algebra $L$ and $D$ be a derivation. Then $D(J)\subseteq J$.
\end{thm}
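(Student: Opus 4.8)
The plan is to reduce the whole statement, via Lemma \ref{J+D(J)} and the maximality of the radical, to a single solvability assertion, and then to settle that assertion by exploiting that a derivation exponentiates to automorphisms while the radical is preserved by every automorphism. First I would form $M=J+D(J)$. By Lemma \ref{J+D(J)} this is an ideal of $L$ containing $J$. Since $J$ is the unique maximal solvable ideal, the desired conclusion $D(J)\subseteq J$ is equivalent to $M=J$, and by maximality this follows as soon as $M$ is shown to be solvable. Thus the theorem reduces to proving that the ideal $J+D(J)$ is solvable.

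The engine I would use is that the radical is a characteristic ideal: if $\varphi$ is any automorphism of $L$, then $\varphi(J)$ is again a solvable ideal, so $\varphi(J)\subseteq J$ by maximality, and applying this to $\varphi^{-1}$ gives $\varphi(J)=J$. Now for a derivation $D$ of the finite-dimensional complex space $L$ and any $s\in\mathbb{C}$, the operator $\exp(sD)=\sum_{k\geq 0}\frac{s^k}{k!}D^k$ is an automorphism of $L$: expanding $D^k([x,y])=\sum_{i}C^i_k[D^i(x),D^{k-i}(y)]$ (the case $\alpha=\beta=0$ of the identity proved in Lemma \ref{der}) and summing the exponential series yields $\exp(sD)([x,y])=[\exp(sD)(x),\exp(sD)(y)]$, while $\exp(-sD)$ is its inverse. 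Hence $\exp(sD)(J)=J$ for every $s\in\mathbb{C}$.

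It remains to extract $D(J)\subseteq J$ from this invariance. To keep the argument algebraic I would split $D=D_0+T$ via Theorem \ref{D=D_0+T}, with $D_0$ a diagonalizable derivation and $T$ a nilpotent derivation, so that it suffices to prove $D_0(J)\subseteq J$ and $T(J)\subseteq J$ separately (both $\exp(sD_0)$ and $\exp(sT)$ are automorphisms by the same reasoning). For $D_0$, write $L=\bigoplus_\lambda L_\lambda$ for its eigenspaces and decompose $x=\sum_\lambda x_\lambda$ with $x_\lambda\in L_\lambda$; then $\exp(sD_0)(x)=\sum_\lambda e^{s\lambda}x_\lambda\in J$ for all $s$, and since the functions $s\mapsto e^{s\lambda}$ attached to the distinct eigenvalues $\lambda$ are linearly independent, a suitable choice of finitely many values $s_1,\dots,s_r$ makes $(e^{s_i\lambda})$ invertible and recovers each $x_\lambda$ as a linear combination of the vectors $\exp(s_iD_0)(x)\in J$; hence $x_\lambda\in J$ and $D_0(x)=\sum_\lambda \lambda x_\lambda\in J$. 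For $T$ nilpotent, $\exp(sT)(x)=\sum_k\frac{s^k}{k!}T^k(x)$ is a \emph{polynomial} in $s$ with all values in $J$, so each coefficient $T^k(x)$ lies in $J$; in particular $T(x)\in J$. Therefore $D(J)=D_0(J)+T(J)\subseteq J$, which gives $M=J$ and completes the argument.

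The main obstacle is exactly the solvability of $J+D(J)$, and it is worth seeing why the direct computation stalls: running the derived series of $M=J+D(J)$ one checks $[J,J],[J,D(J)],[D(J),J]\subseteq J$, but $[D(J),D(J)]$ reintroduces a $D(J)$-component. Applying $D$ to $[J,J]$ gives only $D(J^{(2)})\subseteq J$, not $D^2(J^{(2)})\subseteq J$, so $M^{(2)}\not\subseteq J$ in general and the derived series does not visibly descend into $J$. The automorphism route circumvents this difficulty altogether; the only points left to verify are the genuinely routine facts that $\exp(sD)$ is an automorphism and that the two Vandermonde extractions are valid over $\mathbb{C}$.
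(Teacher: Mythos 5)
Your argument is correct, but it takes a genuinely different route from the paper's. The paper proves the theorem by showing directly that $J+D(J)$ is solvable: starting from Lemma \ref{J+D(J)}, it establishes by induction the containment $(J+D(J))^{(k)}\subseteq J+D^{2^{k-1}}(J^{(k)})$, so that once $J^{(m)}=0$ one gets $(J+D(J))^{(m)}\subseteq J$, hence $(J+D(J))^{(2m-1)}\subseteq J^{(m)}=0$, and maximality of $J$ finishes the proof. You instead prove $D(J)\subseteq J$ outright: $J$ is a characteristic ideal (stable under every automorphism by maximality applied to $\varphi$ and $\varphi^{-1}$), $\exp(sD)$ is a one-parameter family of automorphisms, and the infinitesimal inclusion is extracted algebraically via the decomposition $D=D_0+T$ of Theorem \ref{D=D_0+T} together with two Vandermonde arguments. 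Your route is conceptually cleaner, proves the stronger fact that the radical is invariant under the full automorphism group, and in fact makes both Lemma \ref{J+D(J)} and your own opening reduction to the solvability of $J+D(J)$ superfluous; on the other hand it leans on the ground field $\mathbb{C}$, finite-dimensionality and Theorem \ref{D=D_0+T}, none of which the derived-series computation needs beyond characteristic zero. One small quibble: your closing claim that the direct computation ``stalls'' because $[D(J),D(J)]$ cannot be controlled is not quite fair, since $[D(x),D(y)]=\tfrac12\bigl(D^2([x,y])-[D^2(x),y]-[x,D^2(y)]\bigr)\in D^2(J^{(2)})+J$, which is precisely the first step of the paper's induction; the derived series does descend into $J$, only after $m$ steps rather than immediately.
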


\begin{proof} By Lemma \ref{J+D(J)} it follows that $J+D(J)$ is an ideal of Leibniz algebra $L$.
We have \[(J+D(J))^{(2)}=[J+D(J),J+D(J)]\subseteq J+[D(J),D(J)]\subseteq J+D^2(J^{(2)})\, .\]
Now assume that
\begin{equation}\label{3.5}
(J+D(J))^{(k)}\subseteq J+D^{2^{k-1}}(J^{(k)})
\end{equation}
 for some natural $k>1$.

Then
\begin{multline*}
(J+D(J))^{(k+1)}=[(J+D(J))^{(k)},(J+D(J))^{(k)}] \\ \subseteq [J+D^{2^{k-1}}(J^{(k)}),J+D^{2^{k-1}}(J^{(k)})]\subseteq
J+[D^{2^{k-1}}(J^{(k)}),D^{2^{k-1}}(J^{(k)})] \\ \subseteq J+D^{2^{k-1}+2^{k-1}}([J^{(k)},J^{(k)}])=J+D^{2^k}(J^{(k+1)}) \,.
\end{multline*}
Hence, \eqref{3.5} is verified.

Let $J^{(m)}=0$. Then $(J+D(J))^{(m)}\subseteq J+D^{2^{m-1}}(J^{(m)})=J$.
Now $(J+D(J))^{(2m-1)}=\left((J+D(J))^{(m)}\right)^{(m)}\subseteq J^{(m)}=0$.

Hence, $J+D(J)$ is a solvable ideal of Leibniz algebra $L$. Since $J$ is the solvable radical of $L$, it follows that $J+D(J)\subseteq J$ and therefore, $D(J)\subseteq J$.
\end{proof}

\begin{rem} In Theorem \ref{D(J)} if $J$ is the nilradical, analogous arguments establish the invariance of $J$ with respect to any derivation of $L$.
\end{rem}

It is not difficult to verify that a derivation in a Leibniz algebra induces a derivation in the corresponding Lie quotient algebra. However, the following example shows that the inverse is not necessarily true, i.e., not every derivation in the Lie quotient algebra can be extended to a derivation of the Leibniz algebra.
\begin{ex}
Consider a Leibniz algebra $L=\langle e_1,\dots, e_m, f_1,\dots,f_m\rangle$ with the following multiplication
\begin{align*}
[e_i,e_i]= & \ f_i, \  \ 1\leq i \leq m \,\\
[e_1,e_i]= & \ f_i, \  \ 1\leq i\leq m \, .
\end{align*}

Then $L^{\ann}=\langle f_1,\dots,f_m\rangle$ and $L/L^{\ann}$ is an abelian Lie algebra. Therefore, any linear operator in $L/L^{\ann}$ is a derivation.

Now consider an arbitrary derivation $d \colon L\to L$.

Since $[e_p,e_1]=0$ for $p>1$, we have that $0=d([e_p,e_1])=[d(e_p),e_1]+[e_p,d(e_1)]$.

If $d(e_p)=d_{1p}e_1+\dots+d_{mp}e_m+c_{1p}f_1+\dots+c_{mp}f_m$ then $[d(e_p),e_1]=d_{1p}[e_1,e_1]=d_{1p}f_1$.

Now if $d(e_1)=d_{11}e_1+\dots+d_{m1}e_m+c_{11}f_1+\dots+c_{m1}f_m$ then $[e_p,d(e_1)]=d_{p1}[e_p,e_p]=d_{p1}f_p$. Hence we obtain a condition $d_{1p}f_1+d_{p1}f_p=0$ which implies $d_{1p}=d_{p1}=0$ for all $2\leq p \leq m$. Therefore, not every derivation of $L/L^{\ann}$ can be extended to $L$.
\end{ex}

\section*{Acknowledgments}

 The  first author was supported by MICINN grant MTM 2009-14464-C02 (European
FEDER support included) and by Xunta de Galicia grant Incite 09207215 PR.

\address{\small \rm  Manuel Ladra: Department of Algebra,  University of Santiago de Compostela, 15782
Santiago de Compostela, Spain}\\ \email{manuel.ladra@usc.es}

\address{\small \rm  Ikrom  Rikhsiboev: Malaysian Institute of Industrial Technology (MITEC),
University Kuala Lumpur (UniKL),
81750 Masai,
Johor Darul Takzim, Malaysia}\\ \email{ikromr@gmail.com}

\address{\small \rm  Rustam Turdibaev: Department of Mathematics, National University of Uzbekistan,
Vuzgorogok, 27, 100174 Tashkent, Uzbekistan}\\ \email{rustamtm@yahoo.com}

\end{document}